\documentclass[reqno]{amsart}
\usepackage{caption}
\usepackage{subcaption}

\usepackage{mathabx,cite}
\usepackage[utf8]{inputenc} 
\usepackage[english]{babel}
\usepackage{amstext}
\usepackage{euscript}
\usepackage{bbm}
\usepackage{mathrsfs}
\usepackage{enumerate}
\usepackage{graphicx}
\usepackage{amscd}
\usepackage{verbatim}
\usepackage{amssymb}
\usepackage{amsthm}
\usepackage{amsmath}
\usepackage{amstext}
\usepackage{amsfonts}
\usepackage{latexsym}
\usepackage{mathtools} 
\usepackage{enumitem} 
\usepackage{accents} 
\usepackage[foot]{amsaddr} %package that puts the authors addresses on the title page as a footnote (when using AMSART document class only).

 \usepackage[left=1.2in, right=1.2in, top=1.5in, bottom=1.5in]{geometry}
\usepackage[usenames,dvipsnames]{xcolor} 
\definecolor{dkblue}{RGB}{1,31,91} % This is a dark Blue     
\usepackage[colorlinks=true, pdfstartview=FitV, linkcolor=dkblue, citecolor=dkblue, urlcolor=dkblue]{hyperref}

\usepackage{todonotes}
\newcommand{\CC}{\mathbb C}
\newcommand{\RR}{\mathbb R}

\newcommand{\NN}{\mathbb N}
\newcommand{\ZZ}{\mathbb Z}

\newcommand{\TT}{\mathbb T}

\newcommand{\sgn}{\operatorname{sgn}}

\normalsize
\normalsize
\theoremstyle{definition}
\newtheorem{theorem}{Theorem}

\newtheorem{lemma}[theorem]{Lemma}

\newtheorem{remark}[theorem]{Remark}

\numberwithin{equation}{section}
\numberwithin{theorem}{section}
\numberwithin{definition}{section}

\begin{document}

\keywords{}
\subjclass[2010]{}

\title[A fluid-solid interaction problem in porous media]{A fluid-solid interaction problem in porous media}

\author[D. Alonso-Or\'an]{Diego Alonso-Or\'an}
\address{Departamento de An\'{a}lisis Matem\'{a}tico and Instituto de Matem\'aticas y Aplicaciones (IMAULL), Universidad de La Laguna, C/Astrof\'{i}sico Francisco S\'{a}nchez s/n, 38271, La Laguna, Spain. \href{mailto:dalonsoo@ull.edu.es}dalonsoo@ull.edu.es}

\author[R. Granero-Belinch\'on]{Rafael Granero-Belinch\'on}
\address{Departamento  de  Matem\'aticas,  Estad\'istica  y  Computaci\'on,  Universidad  de Cantabria.  Avda.  Los  Castros  s/n,  Santander,  Spain. \href{mailto:rafael.granero@unican.es}rafael.granero@unican.es}

\begin{abstract}
In this work, we derive asymptotic interface models for an elastic Muskat free boundary problem describing Darcy flow beneath an elastic membrane. In a weakly nonlinear regime of small interface steepness, we obtain nonlocal evolution equations that capture the free-boundary dynamics up to quadratic order. In the long-wave thin-film regime, we rewrite the kinematic condition in flux form, flatten the moving domain, and derive a lubrication-type equation. Moreover, we establish well-posedness for these models in suitable Wiener spaces.
\end{abstract}

\thispagestyle{empty}

\maketitle
\tableofcontents

\section{Introduction}\label{sec:intro}

Flow in porous media plays a fundamental role in a wide range of physical and industrial applications,
including groundwater dynamics, oil recovery, geothermal reservoirs, and filtration processes.
At the macroscopic level, slow viscous flows through a porous medium are commonly described by
Darcy’s law, which relates the fluid velocity to the pressure gradient and external forces.
When the flow domain involves a moving interface, Darcy’s law gives rise to free boundary problems,
among which the Muskat problem occupies a central position. \medskip

The Muskat problem models the evolution of an interface separating regions with different physical
properties inside a porous medium.
It originates in the study of immiscible fluids, but several reduced configurations are of independent
interest.
In particular, the one-phase Muskat problem describes the motion of a single incompressible,
viscous fluid in contact with a passive or empty region.
This setting arises naturally in hydrology, for instance in the description of thin water films above dry soil layers,
or in capillary zones where surface tension effects become relevant.
From a mathematical perspective, the Muskat problem is closely related to the Hele--Shaw flow,
and shares many structural features with free boundary problems in fluid mechanics. \medskip
 
In this work we consider a one-phase Muskat-type model in which the free interface is endowed with elastic effects. The fluid motion in the porous medium is governed by Darcy’s law, while the interface dynamics is driven by gravity and capillarity
and is further influenced by an elastic restoring force modeling, for instance, the interaction between the fluid and a thin elastic
sheet or membrane attached to the boundary. In this sense, the resulting elastic Muskat system may be viewed as a Darcy analogue
of capillary--gravity water waves in which surface tension is replaced by a higher-order (Willmore-type) bending energy, possibly
supplemented by a dissipative correction acting along the interface. \medskip

To make this setting precise, we work in a horizontally periodic geometry. Let $x=(x_1,x_2)\in(-L\pi,L\pi)^2$ denote the horizontal variables and let $z$ be the vertical coordinate.
At time $t$, the fluid occupies the region
\[
\Omega(t)=\big\{(x,z)\in\mathbb{R}^3:\,-d<z<h(x,t)\big\},
\]
with free surface
\[
\Gamma(t)=\big\{(x,h(x,t)):\,x\in(-L\pi,L\pi)^2\big\},
\]
and flat impermeable bottom
\[
\Gamma_{\mathrm{bot}}=\big\{(x,-d):\,x\in(-L\pi,L\pi)^2\big\}.
\]
We impose periodic boundary conditions in the horizontal variables.
The unknowns are the Darcy velocity field $u=u(x,z,t)$, the pressure $p=p(x,z,t)$,
and the interface height $h=h(x,t)$.

In Eulerian variables, the elastic Muskat system consists of Darcy’s law with gravity,
the incompressibility constraint, a dynamic boundary condition coupling elastic and dissipative effects,
the kinematic condition at the interface, and an impermeability condition at the bottom:
\begin{align}
	\frac{\mu}{\kappa}u+\nabla_{x,z}p&=-\chi\rho G\,e_3,
	&&\text{in }\Omega(t)\times[0,T],\label{eq:darcy_dim}\\
	\nabla_{x,z}\cdot u&=0,
	&&\text{in }\Omega(t)\times[0,T],\label{eq:incomp_dim}\\
	p&=\gamma\,\mathcal{E}_{\Gamma(t)}-\tau\,\mathcal{D}_{\Gamma(t)},
	&&\text{on }\Gamma(t)\times[0,T],\label{eq:dyn_dim}\\
	\partial_t h&=u\cdot N,
	&&\text{on }\Gamma(t)\times[0,T],\label{eq:kin_dim}\\
	u_3&=0,
	&&\text{on }\Gamma_{\mathrm{bot}}\times[0,T].\label{eq:bot_dim}
\end{align}
Here $\mu>0$ denotes the fluid viscosity, $\kappa>0$ the permeability of the porous medium,
$\rho>0$ the density, and $G>0$ the gravitational acceleration.
The parameter $\chi\in\{\pm1\}$ encodes the Rayleigh--Taylor sign:
$\chi=1$ corresponds to the gravitationally stable configuration,
while $\chi=-1$ describes the unstable regime. The dynamic boundary condition involves an elastic forcing of Willmore type,
together with a dissipative correction.
More precisely, the elastic operator $\mathcal{E}_{\Gamma(t)}$ depends on the geometry
of the interface through its mean and Gauss curvatures,
while the dissipative term $\mathcal{D}_{\Gamma(t)}$ accounts for tangential diffusion
along the interface
\footnote{Their explicit expressions, as well as the associated geometric quantities,
are recalled in Section~\ref{sec:elastic_muskat}.}.  This formulation couples a bulk Darcy flow with higher-order geometric forces acting on the interface,
and provides a natural framework to investigate the interplay between gravity,
tangential diffusion and elasticity in porous media flows.

\subsection*{Previous results}
The Muskat problem is a classical free-boundary model for flows in porous media and has been widely studied due to its
applicability and the delicate interplay between geometry, stability, and regularization mechanisms; see \cite{Bear1988} and
the references therein. The qualitative behavior depends strongly on surface tension, gravity, and whether one considers a
one-phase or two-phase configuration.

With surface tension and without gravity ($g=0$), the capillary term yields a parabolic regularization. Local-in-time
existence for large data was first proved in \cite{DuchonRobert1984,Chen1993,EscherSimonett1997}, and \cite{HQNguyen2019}
established well-posedness for initial data at any subcritical Sobolev regularity. For small perturbations, global existence
near equilibria (planar interfaces or circles) follows from the dominance of the parabolic smoothing over nonlinear effects;
see \cite{Chen1993,ConstantinPugh1993,YeTanveer2011}. When gravity is included, the Rayleigh--Taylor unstable configuration may
exhibit short-time instabilities for large data \cite{GHS2007}, and fingering phenomena can occur when a less viscous fluid
pushes a more viscous one, especially for small surface tension \cite{Otto1997,EscherMatioc2011}. Nonetheless, recent works
prove global existence and instantaneous smoothing for near-planar solutions even in gravity-unstable regimes \cite{GG-BS2020},
and bubble-type solutions can remain globally regular under suitable smallness assumptions on the slope relative to surface
tension \cite{GG-JPS2019Bubble}.

Without surface tension, the dynamics changes substantially. In non-graph settings, finite-time singularities may occur via
particle collision mechanisms \cite{CCFG2016,CordobaPernas-Castano2017}. In contrast, in the stable graph regime there are
maximum principles controlling the height and slope \cite{Kim2003,AlazardOneFluid2019}, which have been used to prove
well-posedness for arbitrary slopes \cite{DGN2021}. At the same time, solutions do not generally smooth instantly
\cite{APW2023}. For small slopes, one has instantaneous smoothing in the stable regime and ill-posedness in the unstable one
\cite{GG-JPS2019}. Moreover, in the stable case, solutions with surface tension converge to solutions without surface tension
as the capillarity coefficient vanishes \cite{Ambrose2014}.

Several features above are specific to the one-phase problem. In the two-phase case, particle collision does not occur
\cite{GancedoStrain2014}, although other singular behaviors can arise, such as loss of the graph property in finite time
\cite{CCFGL-F2012}. We refer to \cite{Sema2017} for a survey and to \cite{GancedoLazar2022,AlazardHung2023} for
recent developments. In the two-dimensional setting,
Wan and Yang \cite{WanYang2026} introduced and studied a Muskat-type formulation in which the free boundary is coupled to an
elastic restoring mechanism, and they established a well-posedness theory in that framework. \medskip

Thin-film and lubrication approximations form another closely related literature. The classical thin-film equation is a
degenerate fourth-order parabolic model, with global weak solutions and singularity formation studied in
\cite{BF1990,BBD-P1995,BP1996} and \cite{Con1993,Con2018}. Thin-film systems for porous media and multiple layers were derived
from Muskat-type models in \cite{EMM2012}, including capillary and gravitational effects. For purely gravity-driven motion,
local existence and stability results were obtained in \cite{EMM2012}, while global weak solutions and convergence were proved
in \cite{ELM2011}; purely capillarity-driven dynamics was treated in \cite{Mat2012}. In the combined gravity--capillarity
regime, local existence and stability in the stable case were shown in \cite{EM2013}, and global weak solutions via
gradient-flow methods were obtained in \cite{LM2013,LM2014}, with periodic low-regularity results in \cite{BG-B2019}. More generally, asymptotic model hierarchies for free-boundary flows in porous media, including weakly nonlinear regimes, have been developed in \cite{GraneroBelinchonScrobogna2019,GraneroBelinchonScrobogna2020}. A recent rigorous derivation of thin-film models directly from the one-phase Muskat problem in an unstable regime is provided by Bocchi and Gancedo \cite{BocchiGancedo2022}.
\medskip

Finally, the present work connects with fluid--structure interaction models in which an elastic interface is coupled to the
surrounding flow. Related problems include hydroelastic waves, the Peskin model, and viscoelastic filtration; see Cameron and
Strain \cite{MR4673875}, Gahn \cite{MR4959949}, Meirmanov \cite{MR2863467}, and Plotnikov and Toland \cite{MR2812947}. In the
hydroelastic setting the fluid is typically governed by the incompressible Euler equations; we refer to \cite{MR2812947} and
the references therein, as well as to \cite{MR2812939} for numerical and experimental developments. Local well-posedness for
two-dimensional hydroelastic waves was established in \cite{MR3656704,MR3608168}, with vorticity and higher-dimensional
extensions in \cite{MR4104949}. Closer in spirit to the present paper, Muskat models with elastic interfaces have been studied very recently by Wan and Yang
\cite{WanYang2026}. More precisely, they consider a two-dimensional Muskat system coupled to an elastic restoring mechanism
and establish a well-posedness theory for the resulting model.
\subsection*{Main results}
The aim of this paper is twofold. First, starting from an elastic one-phase Muskat free boundary problem in a bounded-depth porous medium, we derive reduced interface equations in asymptotic regimes where the dynamics is expected to simplify. Second, for the reduced models that arise, we develop a Wiener-space well-posedness theory that is robust enough to capture the quasilinear/nonlocal structure inherited from the Dirichlet--to--Neumann map and the elastic/dissipative effects. In particular, our analysis produces global solutions for small data and quantitative decay in Wiener norms.
\vspace{-1cm}
\subsubsection*{Weakly nonlinear small-slope models}
In the order-one depth regime ($\delta=1$) and for small interface steepness $\sigma\ll1$, we expand the Dirichlet--to--Neumann operator around the flat equilibrium and retain the evolution up to quadratic order in $\sigma$. Restricting to one space dimension for clarity, we obtain two weakly nonlinear nonlocal models. The first one keeps the quadratic contribution involving $\partial_t h$ inside the nonlinearity and reads
\textcolor{black}{
\begin{align*}
	\big(1-\Theta\,\mathcal G_0\,\partial_{xx}\big)\partial_t h
	&= -\chi\,\mathcal G_0\,h
	-\frac{\lambda}{4}\,\mathcal G_0\,\partial_{xxxx} h \notag\\
	&\quad+\sigma\chi\Big(\mathcal G_0\big(h\,\Lambda\tanh(\Lambda)\,h\big)
	+\partial_x\big(h\,\partial_x h\big)\Big)\notag\\
	&\quad+\sigma\frac{\lambda}{4}\Big(\mathcal G_0\big(h\,\mathcal G_0\,\partial_{xxxx} h\big)
	+\partial_x\big(h\,\partial_{xxxxx} h\big)\Big)\notag\\
	&\quad-\sigma\Theta\Big(\mathcal G_0\big(h\,\mathcal G_0\,\partial_{xx}\partial_t h\big)
	+\partial_x\big(h\,\partial_{xxx}\partial_t h\big)\Big),
\end{align*}
where $\mathcal G_0$ denotes the Dirichlet-to-Neumann map. Similarly, the previous equation, after making explicit the Fourier multiplier corresponding to the Dirichlet-to-Neumann map, reduces to}
\begin{align}
	\label{eq:main_model1}
	\big(1-\Theta\,\Lambda\tanh(\Lambda)\,\partial_{xx}\big)\partial_t h
	&= -\chi\,\Lambda\tanh(\Lambda)\,h
	-\frac{\lambda}{4}\,\Lambda\tanh(\Lambda)\,\partial_{xxxx} h \notag\\
	&\quad+\sigma\chi\Big(\Lambda\tanh(\Lambda)\big(h\,\Lambda\tanh(\Lambda)\,h\big)
	+\partial_x\big(h\,\partial_x h\big)\Big)\notag\\
	&\quad+\sigma\frac{\lambda}{4}\Big(\Lambda\tanh(\Lambda)\big(h\,\Lambda\tanh(\Lambda)\,\partial_{xxxx} h\big)
	+\partial_x\big(h\,\partial_{xxxxx} h\big)\Big)\notag\\
	&\quad-\sigma\Theta\Big(\Lambda\tanh(\Lambda)\big(h\,\Lambda\tanh(\Lambda)\,\partial_{xx}\partial_t h\big)
	+\partial_x\big(h\,\partial_{xxx}\partial_t h\big)\Big).
\end{align}
A second approximation, valid at the same order, consists in replacing $\partial_t h$ inside the quadratic $\sigma\Theta$--term
by its leading-order expression; this yields the alternative weakly nonlinear model
\textcolor{black}{
\begin{align*}
	\big(1-\Theta\,\mathcal G_0\,\partial_{xx}\big)\partial_t h
	&= -\chi\,\mathcal G_0\,h
	-\frac{\lambda}{4}\,\mathcal G_0\,\partial_{xxxx} h \notag\\
	&\quad+\sigma\chi\Big(\mathcal G_0\big(h\,\Lambda\tanh(\Lambda)\,h\big)
	+\partial_x\big(h\,\partial_x h\big)\Big)\notag\\
	&\quad+\sigma\frac{\lambda}{4}\Big(\mathcal G_0\big(h\,\mathcal G_0\,\partial_{xxxx} h\big)
	+\partial_x\big(h\,\partial_{xxxxx} h\big)\Big)\notag\\
	&\quad-\sigma\Theta\Big(\mathcal G_0\big(h\,\mathcal G_0\,\partial_{xx}\mu\big)
	+\partial_x\big(h\,\partial_{xxx}\mu\big)\Big),
\end{align*}
where
\begin{equation*}
	\mu=\big(1-\Theta\,\mathcal G_0\,\partial_{xx}\big)^{-1}
	\left[-\chi\,\mathcal G_0\,h-\frac{\lambda}{4}\,\mathcal G_0\,\partial_{xxxx}h\right].
\end{equation*}
Again, making explicit the Fourier representation of the Dirichlet-to-Neumann map, the previous system reduces to}
\begin{align}
	\label{eq:main_model2}
	\big(1-\Theta\,\Lambda\tanh(\Lambda)\,\partial_{xx}\big)\partial_t h
	&= -\chi\,\Lambda\tanh(\Lambda)\,h
	-\frac{\lambda}{4}\,\Lambda\tanh(\Lambda)\,\partial_{xxxx} h \notag\\
	&\quad+\sigma\chi\Big(\Lambda\tanh(\Lambda)\big(h\,\Lambda\tanh(\Lambda)\,h\big)
	+\partial_x\big(h\,\partial_x h\big)\Big)\notag\\
	&\quad+\sigma\frac{\lambda}{4}\Big(\Lambda\tanh(\Lambda)\big(h\,\Lambda\tanh(\Lambda)\,\partial_{xxxx} h\big)
	+\partial_x\big(h\,\partial_{xxxxx} h\big)\Big)\notag\\
	&\quad-\sigma\Theta\Big(\Lambda\tanh(\Lambda)\big(h\,\Lambda\tanh(\Lambda)\,\partial_{xx}\mu\big)
	+\partial_x\big(h\,\partial_{xxx}\mu\big)\Big),
\end{align}
with
\begin{equation}
	\label{eq:main_mu}
	\mu=\big(1-\Theta\,\Lambda\tanh(\Lambda)\,\partial_{xx}\big)^{-1}
	\left[-\chi\,\Lambda\tanh(\Lambda)\,h-\frac{\lambda}{4}\,\Lambda\tanh(\Lambda)\,\partial_{xxxx}h\right].
\end{equation}
\textcolor{black}{The dimensionless parameters are defined in \eqref{eq:eps_delta_sigma_dim} and \eqref{eq:lambdaTheta_defs} below.}

The novelty here is that elasticity and tangential dissipation lead, after the DtN expansion, to nonlocal 
equations in which the time derivative is acted upon by a nonlocal operator and, in the first model, also appears inside a
quadratic nonlocal commutator. 
\smallskip

On the analytical side, we prove that both weakly nonlinear small-slope models \eqref{eq:main_model1} and
\eqref{eq:main_model2}--\eqref{eq:main_mu} are well-posed for small data in the Wiener framework. More precisely, working on
$\TT$ and for zero-mean initial data $h_0$, we show that smallness of $\|h_0\|_{A^1}$ yields the existence and uniqueness of
mild solutions. In the case $\lambda=0$ we obtain local well-posedness in $A^1$ for \eqref{eq:main_model1}, and if in
addition $h_0\in A^3$ \textcolor{black}{and
$\|h_0\|_{A^0}+\Theta\tanh(1)\|h_0\|_{A^3}$ is sufficiently small, then the solution is global and decays in Wiener norms (Theorem~\ref{thm:wp_A1}). For $\lambda>0$ we
establish global well-posedness in $A^3(\TT)$ under the same two smallness conditions together with exponential decay
in $A^0$ for \eqref{eq:main_model1} (Theorem~\ref{thm:wp_A3_lambda}). }We also treat the alternative approximation
\eqref{eq:main_model2}--\eqref{eq:main_mu}, where the absence of a nonlinear operator acting on $\partial_t h$ simplifies the
argument, and obtain the same global well-posedness and decay (Theorem~\ref{thm:wp_A3_model2}). Finally, all these results
extend to the infinite-depth analogues obtained by replacing $\Lambda\tanh(\Lambda)$ with $\Lambda$
(Remark~\ref{rem:infinite_depth}).

\medskip
\subsubsection*{A thin-film (lubrication) model}
We next consider the long-wave thin-film regime $\delta\ll1$. Rewriting the kinematic condition in conservative form and
flattening the moving domain onto a fixed strip, we derive a lubrication-type evolution with variable mobility. In one space
dimension, the resulting leading-order thin-film model reads
\begin{equation}
	\label{eq:main_lub}
	\partial_t h+\sqrt{\delta}\,\partial_x\Big((1+\varepsilon h)\,\partial_x\mu\Big)=0,
	\qquad
	\mu:=\Theta\,\partial_{xx}\partial_t h-\chi h-\frac{\lambda}{4}\,\partial_x^{4}h.
\end{equation}
The novelty of \eqref{eq:main_lub} is that the dissipation term $\Theta\,\partial_{xx}\partial_t h$ remains coupled to the
mobility $(1+\varepsilon h)$ at leading order, producing a genuinely nonlinear elliptic operator acting on $\partial_t h$
when the equation is written as a first-order system (\textcolor{black}{compare with} \cite{BG-B2019} \textcolor{black}{where this elliptic problem acting on the time derivative is absent}). \textcolor{black}{A similar lubrication approximation for Navier-Stokes flow was derived in \cite{BukalMuha2022} (see also \cite{BukalMuha2021}). The thin-film PDE derived in \cite{BukalMuha2022} also has an elliptic problem acting on the time derivative of the unknown.}

\smallskip
Finally, we prove a global well-posedness result for \eqref{eq:main_lub} in Wiener spaces (Theorem~\ref{thm:wp_lub_A4_statement}). More precisely, for zero-mean data
$h_0\in A^4(\TT)$ satisfying a smallness assumption in $A^1$, we construct a unique global mild solution
$h\in C([0,\infty);A^4(\TT))$. Moreover, the solution satisfies a dissipative energy inequality that yields uniform-in-time
bounds and decay in the Wiener norm $\|h(t)\|_{A^0}$ as $t\to\infty$.

\subsection*{Plan of the paper}
Section~\ref{sec:elastic_muskat} introduces the elastic Muskat problem, recalling the Eulerian formulation and geometric quantities (Subsection~\ref{subsec:model_eulerian}) and deriving the potential formulation together with the nondimensional system and parameters (Subsection~\ref{subsec:potential_formulation}). In Section~\ref{sec:DTN_sigma} we derive weakly nonlinear interface models in the small-slope regime via an expansion of the Dirichlet--to--Neumann operator. \textcolor{black}{In Section~\ref{sec:lubrication} we study the thin-film regime: we first rewrite the kinematic condition in flux form (Subsection~\ref{subsec:lubrication_flux}), then flatten the moving strip to a fixed domain (Subsection~\ref{subsec:lubrication_flattening}), and finally derive a leading-order lubrication model (Subsection~\ref{subsec:lubrication_model}).
Section~\ref{sec:notation_spaces} collects the notation, Wiener spaces, and the basic inequalities used throughout the paper.
Section~\ref{sec:wp1} is devoted to the well-posedness theory of these weakly nonlinear models in Wiener spaces. } Finally, Section~\ref{sec:wp2} establishes the well-posedness of the thin-film model in appropriate Wiener spaces.

\section{The elastic Muskat problem and its potential formulation}\label{sec:elastic_muskat}

In this section we make precise the elastic Muskat setting introduced in the introduction and fix notation.
Rather than repeating the full Eulerian formulation, we only recall the parts that will be used in the subsequent non--dimensionalization and in the derivation of a potential formulation on the moving domain.

\subsection{Geometric setting and eulerian formulation}\label{subsec:model_eulerian}

We work in a horizontally periodic geometry and write the free surface as a graph.
At time $t$ the fluid occupies
\[
\Omega(t)=\big\{(x,z)\in\mathbb{R}^3:\,-d<z<h(x,t)\big\},
\qquad x=(x_1,x_2)\in(-L\pi,L\pi)^2,
\]
with free boundary $\Gamma(t)=\{(x,h(x,t))\}$ and flat impermeable bottom
$\Gamma_{\mathrm{bot}}=\{(x,-d)\}$. We impose periodic boundary conditions in $x$.
The unknowns are the Darcy velocity $u=u(x,z,t)$, the pressure $p=p(x,z,t)$, and the interface height $h=h(x,t)$.

The governing equations are Darcy’s law with gravity, incompressibility, a dynamic boundary condition coupling elastic
and dissipative effects, the kinematic condition, and an impermeable bottom boundary; see
\eqref{eq:darcy_dim}--\eqref{eq:bot_dim} in the Introduction. We only recall here that the Rayleigh--Taylor sign
$\chi\in\{\pm1\}$ distinguishes the stable ($\chi=1$) and unstable ($\chi=-1$) configurations.

Since $\Gamma(t)$ is a graph, we use the (non-unit) upward normal
\begin{equation}\label{eq:nonunit_normal_dim}
	N:=(-\nabla_x h,\,1),
	\qquad |N|=\sqrt{1+|\nabla_x h|^2},
\end{equation}
and the unit normal $n:=N/|N|$. The dynamic boundary condition involves a Willmore-type elastic forcing
together with a tangential dissipative correction, written in \eqref{eq:dyn_dim} as
\[
p=\gamma\,\mathcal{E}_{\Gamma(t)}-\tau\,\mathcal{D}_{\Gamma(t)}\qquad\text{on }\Gamma(t).
\]

\textcolor{black}{
Indeed,as in Plotnikov and Toland \cite{MR2812947}, we assume that the elastic force derives from a curvature-based bending energy of the form
	\begin{equation}
	E_b[\eta]
	= \int_{L\mathbb{S}^{1}\times L\mathbb{S}^{1}}
	W_b\big(H(\eta)\big)\,
	\sqrt{1+|\nabla_{x}\eta|^2}\,dx,
	\label{eq:bending-general}
	\end{equation}
	where $H(\eta)$ denotes the mean curvature of the free surface $\Gamma(t)$ and $W_b$ is a prescribed bending density. A widely used choice is
	\[
	W_b(H)=\frac{1}{2}H^2.
	\]
The elastic force associated with this energy is obtained from the first	variation of the previous functional. As shown in \cite{MR2812947}, one obtains the fourth-order nonlinear geometric operator
\begin{equation}\label{eq:E_dim}
	\mathcal{E}_{\Gamma(t)}
	=\frac12\,\Delta_\Gamma \mathcal{H}(h)
	+\mathcal{H}(h)^3-\mathcal{H}(h)\,K(h),
\end{equation}
where $\Delta_\Gamma$ is the Laplace--Beltrami operator and $K$ is the	Gauss curvature of the evolving surface.  Finally, we also consider viscoelastic damping of Kelvin--Voigt type. This damping is modelled by
\begin{equation}\label{eq:D_dim}
	\mathcal{D}_{\Gamma(t)}=\Delta_x \partial_t h,
	\qquad \Delta_x=\partial_{x_1}^2+\partial_{x_2}^2.
\end{equation}
	corresponding to a Rayleigh dissipation potential $\mathcal{R}(\eta)=
	\frac{\gamma}{2}\int | \partial_{t}\nabla_{x}\eta|^2\,dx$. 
The presence of $\mathcal{D}_{\Gamma(t)}$ leads to a stabilization term in the wave equations derived below.   In the recent paper \cite{WanYang2026}, Wan and Yang
 study the corresponding elastic Muskat problem without
this additional damping, so we do not claim that it is required for
well-posedness of the full free-boundary system. In the reduced models derived
here, however, it produces the operator
$1-\Theta\mathcal G_0\partial_{xx}$ acting on $\partial_t h$, which is used
directly in the inverse and higher-Wiener-norm estimates below.}

For completeness, we recall the graph expressions used throughout the paper. The first fundamental form is
\begin{equation}\label{eq:metric_dim}
	g_{ij}=\delta_{ij}+h_{x_i}h_{x_j},
	\qquad |g|=1+|\nabla_x h|^2,
	\qquad g^{ij}=(g_{ij})^{-1},
\end{equation}
and the Laplace--Beltrami operator acting on a scalar $f=f(x)$ is
\begin{equation}\label{eq:LB_dim}
	\Delta_\Gamma f
	=\frac{1}{\sqrt{|g|}}\,
	\partial_{x_i}\Big(\sqrt{|g|}\,g^{ij}\,\partial_{x_j}f\Big),
\end{equation}
(with Einstein summation over $i,j\in\{1,2\}$). With the convention
$\mathcal H=\frac12(\kappa_1+\kappa_2)$, the mean curvature and Gauss curvature are
\begin{equation}\label{eq:H_dim}
	\mathcal{H}(h)
	=\frac{1}{2\sqrt{|g|}}
	\Big(g^{11}h_{x_1x_1}+2g^{12}h_{x_1x_2}+g^{22}h_{x_2x_2}\Big),
\end{equation}
\begin{equation}\label{eq:K_dim}
	K(h)=\frac{h_{x_1x_1}h_{x_2x_2}-(h_{x_1x_2})^{2}}{\big(1+|\nabla_x h|^{2}\big)^{2}}.
\end{equation}

\subsection{Potential formulation and dimensionless formulation}\label{subsec:potential_formulation}

A convenient way to reduce the bulk variables is to introduce a potential associated with Darcy’s law.
Following \eqref{eq:darcy_dim}, we define
\begin{equation}\label{eq:Phi_def_dim}
	\Phi:=\frac{\kappa}{\mu}\big(-p-\chi\rho G z\big),
\end{equation}
so that
\begin{equation}\label{eq:u_gradPhi_dim}
	u=\nabla_{x,z}\Phi\qquad \text{in }\Omega(t).
\end{equation}
By incompressibility \eqref{eq:incomp_dim}, $\Phi$ is harmonic in the bulk,
\begin{equation}\label{eq:LaplacePhi_dim}
	\Delta_{x,z}\Phi=0 \qquad \text{in }\Omega(t),
\end{equation}
and the impermeability condition \eqref{eq:bot_dim} becomes the Neumann condition
\begin{equation}\label{eq:bottom_Neumann_dim}
	\partial_z\Phi=0 \qquad \text{on }\Gamma_{\mathrm{bot}}.
\end{equation}
On the free boundary, the dynamic condition \eqref{eq:dyn_dim} yields the Dirichlet \textcolor{black}{boundary condition}
\begin{equation}\label{eq:top_Dirichlet_dim}
	\Phi
	=-\frac{\gamma\kappa}{\mu}\,\mathcal{E}_{\Gamma(t)}
	+\frac{\tau\kappa}{\mu}\,\mathcal{D}_{\Gamma(t)}
	-\chi\frac{\kappa\rho G}{\mu}\,h,
	\qquad \text{on }\Gamma(t).
\end{equation}
Hence, for each fixed $t$, $\Phi$ solves the mixed boundary value problem
\begin{equation}\label{eq:Phi_elliptic_dim}
	\left\lbrace
	\begin{aligned}
		&\Delta_{x,z}\Phi=0, &&\text{in }\Omega(t),\\
		&\Phi=-\frac{\gamma\kappa}{\mu}\,\mathcal{E}_{\Gamma(t)}
		+\frac{\tau\kappa}{\mu}\,\mathcal{D}_{\Gamma(t)}
		-\chi\frac{\kappa\rho G}{\mu}\,h, &&\text{on }\Gamma(t),\\
		&\partial_z\Phi=0, &&\text{on }\Gamma_{\mathrm{bot}}.
	\end{aligned}
	\right.
\end{equation}
The kinematic condition \eqref{eq:kin_dim} can be written in terms of $\Phi$ using $u=\nabla_{x,z}\Phi$ and \eqref{eq:nonunit_normal_dim}:
\begin{equation}\label{eq:kinematic_Phi_dim}
	\partial_t h
	=\nabla_{x,z}\Phi\cdot N
	=\partial_z\Phi-\nabla_x h\cdot \nabla_x\Phi
	\qquad \text{on }z=h(x,t).
\end{equation}
Equations \eqref{eq:Phi_elliptic_dim}--\eqref{eq:kinematic_Phi_dim} thus provide a closed evolution:
given $h(t)$, solve \eqref{eq:Phi_elliptic_dim} for $\Phi(t)$ in $\Omega(t)$ and update the interface by \eqref{eq:kinematic_Phi_dim}.  \medskip

We now introduce a nondimensional formulation of the elastic Muskat system, which allows us to identify the relevant dimensionless parameters and to prepare the asymptotic analysis carried out in the subsequent sections. To that purpose, we introduce dimensionless variables
\begin{equation}\label{eq:nondim_vars_dim}
	(x_1,x_2)=L(\tilde x_1,\tilde x_2),\qquad
	z=d\,\tilde z,\qquad
	t=\frac{\mu L}{\rho\kappa G}\,\tilde t,
\end{equation}
and scale the unknowns as
\begin{equation}\label{eq:nondim_unknowns_dim}
	h(x,t)=H\,\tilde h(\tilde x,\tilde t),
	\qquad
	\Phi(x,z,t)=\frac{H\kappa\rho G}{\mu}\,\tilde\Phi(\tilde x,\tilde z,\tilde t).
\end{equation}
We define the non-dimensional parameters
\begin{equation}\label{eq:eps_delta_sigma_dim}
	\delta=\frac{d^2}{L^2},\qquad
	\varepsilon=\frac{H}{d},\qquad
	\sigma=\frac{H}{L}=\varepsilon\sqrt{\delta}.
\end{equation}
Here $\delta$ is the depth-to-wavelength aspect ratio, $\varepsilon=H/d$ is the amplitude-to-depth ratio,
and $\sigma=H/L$ is the geometric slope parameter. In the rescaled variables the horizontal periodic box becomes $(-\pi,\pi)^2$.
Dropping tildes from now on, we write $\TT^2:=(-\pi,\pi)^2$ and the nondimensional fluid domain and boundaries are
\begin{align}\label{eq:OmegaGamma_nondim}
	\Omega(t)
	&=\Big\{(x,z)\in\TT^2\times\RR:\ -1<z<\varepsilon h(x,t)\Big\},\\
	\Gamma(t)
	&=\Big\{(x,\varepsilon h(x,t)):\ x\in\TT^2\Big\},\\
	\Gamma_{\mathrm{bot}}
	&=\TT^2\times\{-1\}.
\end{align}
In these variables, the potential $\Phi$ satisfies
\begin{equation}\label{eq:final_system_potential}
	\left\lbrace
	\begin{aligned}
		&\left(\delta\Delta_x+\partial_{zz}\right)\Phi=0,
		&&\text{in }\Omega(t),\\[1mm]
		&\Phi=-\chi h-\lambda\,\mathcal E^\sigma(h)+\Theta\,\Delta_x\partial_t h,
		&&\text{on }\Gamma(t),\\[1mm]
		&\partial_z\Phi=0,
		&&\text{on }\Gamma_{\mathrm{bot}},\\[1mm]
		&\partial_t h
		=\frac{1}{\sqrt{\delta}}\,\partial_z\Phi-\sigma\,\nabla_x h\cdot \nabla_x\Phi,
		&&\text{on }\Gamma(t).
	\end{aligned}
	\right.
\end{equation}
Here
\begin{equation}\label{eq:lambdaTheta_defs}
	\lambda:=\frac{\gamma}{2\rho G L^4},
	\qquad
	\Theta:=\frac{\tau\kappa}{\mu L^3},
\end{equation}
and the geometric aspect ratio $\sigma=\varepsilon\sqrt{\delta}$ enters only through the elastic operator $\mathcal E^\sigma$. More precisely, we define the induced metric (dimensionless) by
\begin{equation}\label{eq:geom_metric_sigma}
	g_{ij}=\delta_{ij}+\sigma^2 h_{x_i}h_{x_j},
	\qquad
	|g|=1+\sigma^2|\nabla_x h|^2,
	\qquad
	g^{ij}=(g_{ij})^{-1}.
\end{equation}
The upward unit normal is
\begin{equation}\label{eq:unit_normal_sigma}
	n=\frac{1}{\sqrt{|g|}}\big(-\sigma\nabla_x h,\,1\big).
\end{equation}
With these definitions, the nondimensional Willmore-type elastic operator in terms of $g^{ij}$ and $|g|$ is given by
\begin{align}\label{eq:E_sigma_explicit}
	\mathcal E^\sigma(h)
	&=
\,\frac{1}{\sqrt{|g|}}\,
	\partial_{x_i}\!\left(
	\sqrt{|g|}\,g^{ij}\,\partial_{x_j}\left(
	\frac{1}{2\sqrt{|g|}}
	\Big(
	g^{11}h_{x_1x_1}
	+2g^{12}h_{x_1x_2}
	+g^{22}h_{x_2x_2}
	\Big)\right)\right)\notag\\
	&\quad+\frac{\sigma^2}{4|g|^{3/2}}
	\Big(
	g^{11}h_{x_1x_1}
	+2g^{12}h_{x_1x_2}
	+g^{22}h_{x_2x_2}
	\Big)^3\notag\\
	&\quad-\frac{\sigma^2}{\sqrt{|g|}}
	\Big(
	g^{11}h_{x_1x_1}
	+2g^{12}h_{x_1x_2}
	+g^{22}h_{x_2x_2}
	\Big)\,
	\frac{
		h_{x_1x_1}h_{x_2x_2}-(h_{x_1x_2})^{2}}
	{\big(1+\sigma^2|\nabla_x h|^{2}\big)^{2}}.
\end{align}

	\section{A weakly nonlinear model in the small-slope regime }\label{sec:DTN_sigma}
In this section we derive a weakly nonlinear, nonlocal evolution equation by expanding the
Dirichlet--to--Neumann (DtN) operator in the small-slope regime. The argument follows closely the DtN expansion strategy
introduced in \cite{GraneroBelinchonScrobogna2019}, adapted to the present elastic setting. The expansion parameter is the geometric
aspect ratio $\sigma:=H/L$, so that typical interface slopes satisfy $|\nabla_x h_{\mathrm{phys}}|\sim\sigma$.
We develop the DtN map and the resulting evolution model with an accuracy up to $\mathcal{O}(\sigma^{2})$.
Throughout, we restrict to the order-one depth regime $\delta=1$, in which case $\sigma=\varepsilon\sqrt{\delta}=\varepsilon$.
Let us stress that this small-slope expansion is conceptually different from the lubrication limit $\delta\ll1$ considered later.

We start from the nondimensional potential formulation \eqref{eq:final_system_potential} obtained in the previous section
and specialize to $\delta=1$. For each fixed $t$, the potential $\Phi$ solves
\[
\Delta_{x,z}\Phi=0\quad\text{in }\Omega(t),\qquad
\partial_z\Phi=0\quad\text{on }\Gamma_{\mathrm{bot}},
\]

On the free surface $\Gamma(t)$ we set
\begin{equation}\label{eq:Psi_def_sigma_full}
	\Psi:=-\chi h-\lambda\,\mathcal E^{\sigma}(h)+\Theta\,\Delta_x\partial_t h,
	\qquad \chi\in\{\pm1\},
\end{equation}
where $\mathcal E^{\sigma}$ is the elastic operator defined in \eqref{eq:E_sigma_explicit}. Then the kinematic
condition in \eqref{eq:final_system_potential} can be written in terms of the Dirichlet--to--Neumann map as
\begin{equation}\label{eq:kinematic_DTN_sigma_full}
	\partial_t h=\mathcal G(\sigma h)\Psi.
\end{equation}
For the flat strip ($h\equiv0$), the DtN operator is the Fourier multiplier
\begin{equation}\label{eq:G0_sigma_full}
	\mathcal G_0:=\Lambda\tanh(\Lambda),
	\qquad
	\widehat{\Lambda f}(\xi)=|\xi|\,\widehat f(\xi),
\end{equation}
where $\xi\in\ZZ^2$ (periodic setting). Moreover, using an expansion of the DtN operator in terms of the steepness parameter $\sigma$ (cf. \cite[Section 3.6.2]{Lannes2013}) around the rest state we find that 
\begin{equation}\label{eq:DTN_expansion_sigma_full}
	\mathcal G(\sigma h)\phi
	=\mathcal G_0\phi
	-\sigma\Big(\mathcal G_0\big(h\,\mathcal G_0\phi\big)+\nabla_x\cdot\big(h\,\nabla_x\phi\big)\Big)
	+\mathcal{O}(\sigma^2).
\end{equation}
Substituting $\phi=\Psi$ and using \eqref{eq:kinematic_DTN_sigma_full} yields
\begin{equation}\label{eq:model_sigma_pre_full}
	\partial_t h
	=\mathcal G_0\Psi
	-\sigma\Big(\mathcal G_0\big(h\,\mathcal G_0\Psi\big)+\nabla_x\cdot\big(h\,\nabla_x\Psi\big)\Big)
	+\mathcal{O}(\sigma^2).
\end{equation}
To obtain a model consistent up to $\mathcal{O}(\sigma^{2})$, it suffices to approximate the elastic operator by its leading term
\begin{equation*}
	\mathcal E^\sigma(h)=\frac14\,\Delta_x^2 h+\mathcal{O}(\sigma^2),
\end{equation*}
so that
\begin{equation}\label{eq:Psi_linear_sigma_full}
	\Psi=-\chi h-\frac{\lambda}{4}\Delta_x^2 h+\Theta\,\Delta_x\partial_t h+\mathcal{O}(\sigma^2).
\end{equation}

To derive a weakly nonlinear model accurate up to $\mathcal{O}(\sigma^2)$, we substitute \eqref{eq:Psi_linear_sigma_full} into \eqref{eq:model_sigma_pre_full} and retain terms up to $\mathcal{O}(\sigma^2)$. More precisely, we find that
\begin{align}\label{eq:model_sigma_explicit_full}
	\partial_t h
	&= -\chi\,\mathcal G_0 h-\frac{\lambda}{4}\,\mathcal G_0\Delta_x^2 h+\Theta\,\mathcal G_0\Delta_x\partial_t h +\sigma\chi\Big(\mathcal G_0\big(h\,\mathcal G_0 h\big)+\nabla_x\cdot\big(h\,\nabla_x h\big)\Big)\notag\\
	&\quad+\sigma\frac{\lambda}{4}\Big(\mathcal G_0\big(h\,\mathcal G_0\Delta_x^2 h\big)+\nabla_x\cdot\big(h\,\nabla_x\Delta_x^2 h\big)\Big)-\sigma\Theta\Big(\mathcal G_0\big(h\,\mathcal G_0\Delta_x\partial_t h\big)+\nabla_x\cdot\big(h\,\nabla_x\Delta_x\partial_t h\big)\Big).
\end{align}
Equivalently, collecting the linear $\partial_t h$ terms on the left and recalling the definition of the Fourier multiplier \eqref{eq:G0_sigma_full} we find that
\begin{align}\label{eq:model_sigma_LHS_full_multiplier}
	\big(1-\Theta\,\Lambda\tanh(\Lambda)\,\Delta_x\big)\partial_t h
	&= -\chi\,\Lambda\tanh(\Lambda)\,h
	-\frac{\lambda}{4}\,\Lambda\tanh(\Lambda)\,\Delta_x^2 h \notag\\
	&\quad+\sigma\chi\Big(\Lambda\tanh(\Lambda)\big(h\,\Lambda\tanh(\Lambda)\,h\big)
	+\nabla_x\cdot\big(h\,\nabla_x h\big)\Big)\notag\\
	&\quad+\sigma\frac{\lambda}{4}\Big(\Lambda\tanh(\Lambda)\big(h\,\Lambda\tanh(\Lambda)\,\Delta_x^2 h\big)
	+\nabla_x\cdot\big(h\,\nabla_x\Delta_x^2 h\big)\Big)\notag\\
	&\quad-\sigma\Theta\Big(\Lambda\tanh(\Lambda)\big(h\,\Lambda\tanh(\Lambda)\,\Delta_x\partial_t h\big)
	+\nabla_x\cdot\big(h\,\nabla_x\Delta_x\partial_t h\big)\Big).
\end{align}
We observe that the previous equation implies
$$
\partial_t h=\big(1-\Theta\,\Lambda\tanh(\Lambda)\,\Delta_x\big)^{-1}\left[-\chi\,\Lambda\tanh(\Lambda)\,h
	-\frac{\lambda}{4}\,\Lambda\tanh(\Lambda)\,\Delta_x^2 h\right] + \mathcal{O}(\sigma).
$$
As a consequence, we find the second weakly nonlinear model
\begin{align}\label{eq:model_sigma_LHS_full_multiplier2}
	\big(1-\Theta\,\Lambda\tanh(\Lambda)\,\Delta_x\big)\partial_t h
	&= -\chi\,\Lambda\tanh(\Lambda)\,h
	-\frac{\lambda}{4}\,\Lambda\tanh(\Lambda)\,\Delta_x^2 h \notag\\
	&\quad+\sigma\chi\Big(\Lambda\tanh(\Lambda)\big(h\,\Lambda\tanh(\Lambda)\,h\big)
	+\nabla_x\cdot\big(h\,\nabla_x h\big)\Big)\notag\\
	&\quad+\sigma\frac{\lambda}{4}\Big(\Lambda\tanh(\Lambda)\big(h\,\Lambda\tanh(\Lambda)\,\Delta_x^2 h\big)
	+\nabla_x\cdot\big(h\,\nabla_x\Delta_x^2 h\big)\Big)\notag\\
	&\quad-\sigma\Theta\Big(\Lambda\tanh(\Lambda)\big(h\,\Lambda\tanh(\Lambda)\,\Delta_x\mu\big)
	+\nabla_x\cdot\big(h\,\nabla_x\Delta_x\mu\big)\Big),
\end{align}
with
\begin{equation}\label{eq:model_sigma_LHS_full_multiplier2b}
\mu=\big(1-\Theta\,\Lambda\tanh(\Lambda)\,\Delta_x\big)^{-1}\left[-\chi\,\Lambda\tanh(\Lambda)\,h
	-\frac{\lambda}{4}\,\Lambda\tanh(\Lambda)\,\Delta_x^2 h\right].
\end{equation}
\subsubsection*{One dimensional weakly nonlinear interface model}
	For a one-dimensional interface $h=h(x,t)$, $x\in\TT$, the finite-depth model \eqref{eq:model_sigma_LHS_full_multiplier} becomes
	\begin{align}\label{eq:model_sigma_LHS_full_multiplier_1D}
		\big(1-\Theta\,\Lambda\tanh(\Lambda)\,\partial_{xx}\big)\partial_t h
		&= -\chi\,\Lambda\tanh(\Lambda)\,h
		-\frac{\lambda}{4}\,\Lambda\tanh(\Lambda)\,\partial_{xxxx} h \notag\\
		&\quad+\sigma\chi\Big(\Lambda\tanh(\Lambda)\big(h\,\Lambda\tanh(\Lambda)\,h\big)
		+\partial_x\big(h\,\partial_x h\big)\Big)\notag\\
		&\quad+\sigma\frac{\lambda}{4}\Big(\Lambda\tanh(\Lambda)\big(h\,\Lambda\tanh(\Lambda)\,\partial_{xxxx} h\big)
		+\partial_x\big(h\,\partial_{xxxxx} h\big)\Big)\notag\\
		&\quad-\sigma\Theta\Big(\Lambda\tanh(\Lambda)\big(h\,\Lambda\tanh(\Lambda)\,\partial_{xx}\partial_t h\big)
		+\partial_x\big(h\,\partial_{xxx}\partial_t h\big)\Big).
	\end{align}
As before, we observe that the previous equation implies
$$
\partial_t h
		= \left(1-\Theta\,\Lambda\tanh(\Lambda)\,\partial_{xx}\right)^{-1}\left[-\chi\,\Lambda\tanh(\Lambda)\,h
		-\frac{\lambda}{4}\,\Lambda\tanh(\Lambda)\,\partial_{xxxx} h\right]+\mathcal{O}(\sigma).
$$
Hence, we find that the second weakly nonlinear model \eqref{eq:model_sigma_LHS_full_multiplier2}-\eqref{eq:model_sigma_LHS_full_multiplier2b} in the case of a one dimensional interface reads
	\begin{align}\label{eq:model_sigma_LHS_full_multiplier_1D2}
		\big(1-\Theta\,\Lambda\tanh(\Lambda)\,\partial_{xx}\big)\partial_t h
		&= -\chi\,\Lambda\tanh(\Lambda)\,h
		-\frac{\lambda}{4}\,\Lambda\tanh(\Lambda)\,\partial_{xxxx} h \notag\\
		&\quad+\sigma\chi\Big(\Lambda\tanh(\Lambda)\big(h\,\Lambda\tanh(\Lambda)\,h\big)
		+\partial_x\big(h\,\partial_x h\big)\Big)\notag\\
		&\quad+\sigma\frac{\lambda}{4}\Big(\Lambda\tanh(\Lambda)\big(h\,\Lambda\tanh(\Lambda)\,\partial_{xxxx} h\big)
		+\partial_x\big(h\,\partial_{xxxxx} h\big)\Big)\notag\\
		&\quad-\sigma\Theta\Big(\Lambda\tanh(\Lambda)\big(h\,\Lambda\tanh(\Lambda)\,\partial_{xx}\mu\big)
		+\partial_x\big(h\,\partial_{xxx}\mu\big)\Big)
	\end{align}
with
\begin{equation}\label{eq:model_sigma_LHS_full_multiplier_1D2b}
\mu	= \big(1-\Theta\,\Lambda\tanh(\Lambda)\,\partial_{xx}\big)^{-1}\left[-\chi\,\Lambda\tanh(\Lambda)\,h
		-\frac{\lambda}{4}\,\Lambda\tanh(\Lambda)\,\partial_{xxxx} h\right].
\end{equation}

\begin{remark}[Infinite depth models]
If instead of the bounded strip one considers the unbounded-in-$z$ geometry, then $\tanh(\Lambda)$ is replaced by $1$ and
$\mathcal G_0=\Lambda$. In that case the model \eqref{eq:model_sigma_explicit_full} becomes
\begin{align*}
	\partial_t h
	&= -\chi\,\Lambda h-\frac{\lambda}{4}\,\Lambda\Delta_x^2 h+\Theta\,\Lambda\Delta_x\partial_t h +\sigma\chi\Big(\Lambda\big(h\,\Lambda h\big)+\nabla_x\cdot\big(h\,\nabla_x h\big)\Big)\notag\\
	&\quad+\sigma\frac{\lambda}{4}\Big(\Lambda\big(h\,\Lambda\Delta_x^2 h\big)+\nabla_x\cdot\big(h\,\nabla_x\Delta_x^2 h\big)\Big)-\sigma\Theta\Big(\Lambda\big(h\,\Lambda\Delta_x\partial_t h\big)+\nabla_x\cdot\big(h\,\nabla_x\Delta_x\partial_t h\big)\Big).
\end{align*}
Therefore, for a one-dimensional interface $h=h(x,t)$, $x\in\TT$, the infinite-depth model reads
\begin{align*}
	\partial_t h
	&= -\chi\,\Lambda h-\frac{\lambda}{4}\,\Lambda\partial_{xxxx} h+\Theta\,\Lambda\partial_{xx}\partial_t h+\sigma\chi\Big(\Lambda\big(h\,\Lambda h\big)+\partial_x\big(h\,\partial_x h\big)\Big)\notag\\
	&\quad+\sigma\frac{\lambda}{4}\Big(\Lambda\big(h\,\Lambda\partial_{xxxx} h\big)+\partial_x\big(h\,\partial_{xxxxx} h\big)\Big)-\sigma\Theta\Big(\Lambda\big(h\,\Lambda\partial_{xx}\partial_t h\big)+\partial_x\big(h\,\partial_{xxx}\partial_t h\big)\Big)
\end{align*}
Moreover, proceeding  as above, we can also derive a different weakly nonlinear approximation equation at the same order of precision that given by
\begin{align}\label{eq:model_sigma_infinite_depth_full2}
	\partial_t h
	&= -\chi\,\Lambda h-\frac{\lambda}{4}\,\Lambda\Delta_x^2 h+\Theta\,\Lambda\Delta_x\partial_t h +\sigma\chi\Big(\Lambda\big(h\,\Lambda h\big)+\nabla_x\cdot\big(h\,\nabla_x h\big)\Big)\notag\\
	&\quad+\sigma\frac{\lambda}{4}\Big(\Lambda\big(h\,\Lambda\Delta_x^2 h\big)+\nabla_x\cdot\big(h\,\nabla_x\Delta_x^2 h\big)\Big)-\sigma\Theta\Big(\Lambda\big(h\,\Lambda\Delta_x\mu\big)+\nabla_x\cdot\big(h\,\nabla_x\Delta_x\mu\big)\Big).
\end{align}
with
\begin{equation}\label{eq:model_sigma_infinite_depth_full2b}
\mu=(1-\Theta\,\Lambda\Delta_x)^{-1} \left(-\chi\,\Lambda h-\frac{\lambda}{4}\,\Lambda\Delta_x^2 h\right).
\end{equation}
If the interface is one-dimensional, $h=h(x,t)$ with $x\in\TT$, then $\nabla_x=\partial_x$ and $\Delta_x=\partial_{xx}$, and
\eqref{eq:model_sigma_infinite_depth_full2}-\eqref{eq:model_sigma_infinite_depth_full2b} becomes
\begin{align*}
	\partial_t h
	&= -\chi\,\Lambda h-\frac{\lambda}{4}\,\Lambda\partial_{xxxx} h+\Theta\,\Lambda\partial_{xx}\partial_t h
	+\sigma\chi\Big(\Lambda\big(h\,\Lambda h\big)+\partial_x\big(h\,\partial_x h\big)\Big)\notag\\
	&\quad+\sigma\frac{\lambda}{4}\Big(\Lambda\big(h\,\Lambda\partial_{xxxx} h\big)+\partial_x\big(h\,\partial_{xxxxx} h\big)\Big)
	-\sigma\Theta\Big(\Lambda\big(h\,\Lambda\partial_{xx}\mu\big)+\partial_x\big(h\,\partial_{xxx}\mu\big)\Big),
\end{align*}
with
\begin{equation*}
	\mu=\bigl(1-\Theta\,\Lambda\partial_{xx}\bigr)^{-1}
	\left(-\chi\,\Lambda h-\frac{\lambda}{4}\,\Lambda\partial_{xxxx} h\right).
\end{equation*}
\end{remark}

	\section{A lubrication approximation in the thin-film regime}\label{sec:lubrication}
In this section we derive a lubrication (thin--film) model in the long-wave regime $\delta\ll1$.
Throughout, the asymptotic parameter is $\delta$ (the depth-to-wavelength ratio), while the geometric steepness
$\sigma=\varepsilon\sqrt{\delta}$ is not treated as an independent small parameter: it only enters through its
$\delta$--dependence (with $\varepsilon=H/d$ regarded as order one).
Our derivation follows closely the strategy introduced by Bocchi and Gancedo \cite{BocchiGancedo2022},
adapted here to the presence of elastic and dissipative effects.
Starting from the nondimensional potential formulation \eqref{eq:final_system_potential}, we first rewrite the kinematic
condition in conservative (flux) form, which is well suited for long-wave asymptotics. We then flatten the moving domain
onto a fixed strip and perform a $\delta$--expansion of the pullback potential, leading to a closed evolution equation for
the interface height $h$.

\subsection{Flux--potential formulation}\label{subsec:lubrication_flux}

Since $\Phi$ solves
\[
(\delta\Delta_x+\partial_{zz})\Phi=0 \qquad \text{in }\Omega(t),
\]
we have
\begin{equation}\label{eq:lub_dzz_identity}
	\partial_{zz}\Phi=-\delta\,\Delta_x\Phi.
\end{equation}
Integrating \eqref{eq:lub_dzz_identity} in $z\in(-1,\varepsilon h(x,t))$ and using the bottom condition
$\partial_z\Phi(x,-1,t)=0$ gives
\begin{equation}\label{eq:lub_dz_representation}
	\partial_z\Phi(x,\varepsilon h,t)
	=-\delta\int_{-1}^{\varepsilon h(x,t)}\Delta_x\Phi(x,z,t)\,\ d z.
\end{equation}
On the other hand, Leibniz' rule yields
\begin{equation}\label{eq:lub_Leibniz}
	\nabla_x\cdot\!\left(\int_{-1}^{\varepsilon h(x,t)}\nabla_x\Phi(x,z,t)\,dz\right)
	=\int_{-1}^{\varepsilon h(x,t)}\Delta_x\Phi(x,z,t)\,dz
	+\varepsilon\,\nabla_x h\cdot\nabla_x\Phi(x,\varepsilon h,t).
\end{equation}
Combining \eqref{eq:lub_dz_representation}--\eqref{eq:lub_Leibniz} and using $\sigma=\varepsilon\sqrt{\delta}$, we obtain
\[
\frac{1}{\sqrt{\delta}}\partial_z\Phi(x,\varepsilon h,t)
-\sigma\,\nabla_x h\cdot\nabla_x\Phi(x,\varepsilon h,t)
=-\sqrt{\delta}\,\nabla_x\cdot\!\left(\int_{-1}^{\varepsilon h(x,t)}\nabla_x\Phi(x,z,t)\,dz\right).
\]
Therefore, the kinematic condition in \eqref{eq:final_system_potential} can be written in conservative form as
\begin{equation}\label{eq:lub_evolution_flux}
	\partial_t h
	+\sqrt{\delta}\,\nabla_x\cdot\left(\int_{-1}^{\varepsilon h(x,t)}\nabla_x\Phi(x,z,t)\,dz\right)=0,
	\qquad x\in\TT^2.
\end{equation}

Together with the bulk equation and boundary conditions, we obtain the potential--flux formulation
\begin{equation}\label{eq:lub_potential_flux_system}
	\left\lbrace
	\begin{aligned}
		&(\delta\Delta_x+\partial_{zz})\Phi=0,
		&&\text{in }\Omega(t),\\[1mm]
		&\Phi=-\chi h-\lambda\,\mathcal E^\sigma(h)+\Theta\,\Delta_x\partial_t h,
		&&\text{on }\Gamma(t),\\[1mm]
		&\partial_z\Phi=0,
		&&\text{on }\Gamma_{\mathrm{bot}},\\[1mm]
		&\partial_t h
		+\sqrt{\delta}\,\nabla_x\cdot\left(\int_{-1}^{\varepsilon h(x,t)}\nabla_x\Phi(x,z,t)\,dz\right)=0,
		&& x\in\TT^2.
	\end{aligned}
	\right.
\end{equation}

\subsection{Flattening the moving strip}\label{subsec:lubrication_flattening}

To perform asymptotic expansions on a fixed geometry, we flatten the moving strip
\[
\Omega(t)=\{(x,z)\in\TT^2\times\RR:\,-1<z<\varepsilon h(x,t)\}
\]
onto the fixed strip
\[
S:=\TT^2\times(-1,0),\qquad \TT^2:=(-\pi,\pi)^2.
\]
We introduce a time-dependent diffeomorphism $\Sigma(t,\cdot):S\to\Omega(t)$ of the form
\begin{equation}\label{eq:lub_Sigma_def}
	\Sigma(t,x,z)=(x,\,z+\varphi(t,x,z)),
\end{equation}
where the lifting $\varphi$ satisfies
\begin{equation}\label{eq:lub_varphi_bc}
	\varphi(t,x,0)=\varepsilon h(x,t),\qquad \varphi(t,x,-1)=0,
\end{equation}
and we assume the non-degeneracy condition
\begin{equation}\label{eq:lub_diffeo_condition}
	1+\partial_z\varphi(t,x,z)\ge c_0>0.
\end{equation}
For lubrication computations we take the affine lifting
\begin{equation}\label{eq:lub_varphi_affine}
	\varphi(t,x,z)=\varepsilon(1+z)\,h(x,t).
\end{equation}

We define the pullback potential on $S$ by
\begin{equation}\label{eq:lub_phi_def}
	\phi(t,x,z):=\Phi\big(t,\Sigma(t,x,z)\big)=\Phi\big(t,x,z+\varphi(t,x,z)\big).
\end{equation}
Differentiating \eqref{eq:lub_phi_def} yields
\begin{equation}\label{eq:lub_chain_rule}
	\partial_{x_i}\phi=\partial_{x_i}\Phi+\partial_z\Phi\,\partial_{x_i}\varphi,
	\qquad
	\partial_z\phi=(1+\partial_z\varphi)\,\partial_z\Phi,
\end{equation}
and therefore
\begin{equation}\label{eq:lub_gradPhi_in_terms_phi}
	\partial_{x_i}\Phi=\partial_{x_i}\phi-\frac{\partial_{x_i}\varphi}{1+\partial_z\varphi}\,\partial_z\phi,
	\qquad
	\partial_z\Phi=\frac{1}{1+\partial_z\varphi}\,\partial_z\phi.
\end{equation}

In $\Omega(t)$, $\Phi$ satisfies $(\delta\Delta_x+\partial_{zz})\Phi=0$, which we write in divergence form as
\begin{equation}\label{eq:lub_div_form_bulk}
	\nabla_{x,z}\cdot\big(A_\delta\nabla_{x,z}\Phi\big)=0,
	\qquad
	A_\delta=\mathrm{diag}(\delta,\delta,1).
\end{equation}
A standard change-of-variables computation yields that $\phi$ solves, on the fixed strip,
\begin{equation}\label{eq:lub_elliptic_flat}
	\nabla_{x,z}\cdot\Big(P_\delta(\varphi)\,\nabla_{x,z}\phi\Big)=0
	\qquad\text{in }S,
\end{equation}
where
\begin{equation}\label{eq:lub_Pdelta_def}
	P_\delta(\varphi):=(\det D\Sigma)\,D\Sigma^{-1}\,A_\delta\,D\Sigma^{-T}.
\end{equation}
In the present graph-type flattening, one computes explicitly
\begin{equation}\label{eq:lub_Pdelta_explicit}
	P_\delta(\varphi)=
	\begin{pmatrix}
		\delta(1+\varphi_z) & 0 & -\delta\,\varphi_{x_1}\\
		0 & \delta(1+\varphi_z) & -\delta\,\varphi_{x_2}\\
		-\delta\,\varphi_{x_1} & -\delta\,\varphi_{x_2} &
		\dfrac{1+\delta|\nabla_x\varphi|^2}{1+\varphi_z}
	\end{pmatrix}.
\end{equation}
Since $z=0$ corresponds to the free surface $z=\varepsilon h(x,t)$ and $z=-1$ to the bottom, the boundary conditions become
\begin{equation}\label{eq:lub_BC_flat}
	\phi(\cdot,0,t)=-\chi h-\lambda\,\mathcal E^\sigma(h)+\Theta\,\Delta_x\partial_t h,
	\qquad
	\partial_z\phi(\cdot,-1,t)=0,
	\qquad x\in\TT^2.
\end{equation}

We now rewrite the kinematic condition \eqref{eq:lub_evolution_flux} on the fixed strip. Changing variables
$z'=z+\varphi(t,x,z)$ (so that $dz'=(1+\varphi_z)dz$ and $z'=\varepsilon h$ corresponds to $z=0$), and using
\eqref{eq:lub_gradPhi_in_terms_phi}, we obtain
\begin{equation}\label{eq:lub_flux_transform}
	\int_{-1}^{\varepsilon h(x,t)}\nabla_x\Phi(x,z',t)\,dz'
	=\int_{-1}^{0}\Big((1+\varphi_z)\,\nabla_x\phi-(\nabla_x\varphi)\,\partial_z\phi\Big)\,dz.
\end{equation}
Therefore the evolution law becomes
\begin{equation}\label{eq:lub_evolution_flat}
	\partial_t h
	+\sqrt{\delta}\,
	\nabla_x\cdot\left(\int_{-1}^{0}\Big((1+\varphi_z)\,\nabla_x\phi-(\nabla_x\varphi)\,\partial_z\phi\Big)\,dz\right)=0,
	\qquad x\in\TT^2.
\end{equation}
For the affine lifting \eqref{eq:lub_varphi_affine} one has $\varphi_z=\varepsilon h$ and
$\nabla_x\varphi=\varepsilon(1+z)\nabla_x h$, so that \eqref{eq:lub_evolution_flat} reduces to
\begin{equation}\label{eq:lub_evolution_flat_affine}
	\partial_t h+\sqrt{\delta}\,\nabla_x\cdot\left(
	(1+\varepsilon h)\int_{-1}^{0}\nabla_x\phi\,dz
	-\varepsilon\,\nabla_x h\int_{-1}^{0}(1+z)\,\partial_z\phi\,dz
	\right)=0,
	\qquad x\in\TT^2.
\end{equation}

% NOTE: You already said you do NOT want to duplicate the fixed-strip system.
% I recommend deleting the following summary block entirely, and only referencing
% \eqref{eq:lub_elliptic_flat}--\eqref{eq:lub_BC_flat} and \eqref{eq:lub_evolution_flat_affine}.
% (If you keep it, rename its label to avoid collisions with earlier sections.)

\subsection{A leading-order lubrication model}\label{subsec:lubrication_model}

Set
\begin{equation}\label{eq:lub_f_def}
	f(x,t):=\phi(x,0,t)
	=-\chi h-\lambda\,\mathcal E^\sigma(h)+\Theta\,\Delta_x\partial_t h.
\end{equation}
To extract the leading long-wave behaviour as $\delta\to0$, we expand the pullback potential as
\begin{equation}\label{eq:lub_phi_expansion}
	\phi=\phi^{0}+\mathcal{O}(\delta)
	\qquad \text{in } S,
\end{equation}
with boundary conditions $\phi^{0}|_{z=0}=f$, $\partial_z\phi^{0}|_{z=-1}=0$.
At leading order one finds $\partial_{zz}\phi^{0}=0$, hence
\begin{equation}\label{eq:lub_phi0}
	\phi^{0}(x,z,t)=f(x,t).
\end{equation}
Using \eqref{eq:lub_phi_expansion} we compute the fluxes in \eqref{eq:lub_evolution_flat_affine}. First,
\begin{equation}\label{eq:lub_int_gradphi}
	\int_{-1}^{0}\nabla_x\phi\,dz
	=\nabla_x f+\mathcal{O}(\delta),
\end{equation}
and, since $\partial_z\phi^{0}\equiv0$,
\begin{equation}\label{eq:lub_int_zphiz}
	\int_{-1}^{0}(1+z)\,\partial_z\phi\,dz
	=\mathcal{O}(\delta).
\end{equation}
Substituting \eqref{eq:lub_int_gradphi}--\eqref{eq:lub_int_zphiz} into \eqref{eq:lub_evolution_flat_affine} yields
\begin{equation}\label{eq:lub_balance_leading}
	\partial_t h+\sqrt{\delta}\,\nabla_x\cdot\Big((1+\varepsilon h)\,\nabla_x f\Big)
	=\mathcal{O}(\delta^{3/2}),
	\qquad f \text{ given by \eqref{eq:lub_f_def}}.
\end{equation}
Truncating at the first nontrivial order gives the leading lubrication model
\begin{equation}\label{eq:lub_model_closed}
	\partial_t h+\sqrt{\delta}\,\nabla_x\cdot\Big((1+\varepsilon h)\,\nabla_x f\Big)=0,
	\qquad
	f=-\chi h-\lambda\,\mathcal E^\sigma(h)+\Theta\,\Delta_x\partial_t h.
\end{equation}

\smallskip

In the long-wave scaling, the physical slope satisfies $|\nabla_x h_{\rm phys}|\sim H/L=\varepsilon\sqrt{\delta}$, hence the
first nonlinear corrections in the elastic operator are of size $\varepsilon^{2}\delta$. Consequently, at the accuracy of
\eqref{eq:lub_model_closed} it is consistent to use the linear approximation
\begin{equation}\label{eq:lub_elastic_linearization}
	\mathcal E^\sigma(h)=\frac14\,\Delta_x^{2}h+\mathcal{O}(\varepsilon^{2}\delta).
\end{equation}
Substituting \eqref{eq:lub_elastic_linearization} into \eqref{eq:lub_model_closed} and discarding the remainder
$\mathcal{O}(\varepsilon^{2}\delta^{3/2})$ yields the explicit thin-film equation
\begin{equation}\label{eq:lub_PDE_explicit}
	\partial_t h+\sqrt{\delta}\,\nabla_x\cdot\Big((1+\varepsilon h)\,\nabla_x\mu\Big)=0,
	\qquad
	\mu:=\Theta\,\Delta_x\partial_t h-\chi h-\frac{\lambda}{4}\,\Delta_x^{2}h.
\end{equation}
Expanding the mobility $(1+\varepsilon h)$ separates the linear and nonlinear contributions:
\begin{equation}\label{eq:lub_PDE_explicit_split}
	\partial_t h+\sqrt{\delta}\,\Delta_x\mu=-\sqrt{\delta}\,\varepsilon\,\nabla_x\cdot\big(h\,\nabla_x\mu\big),
	\qquad \mu \text{ as in \eqref{eq:lub_PDE_explicit}}.
\end{equation}

\begin{remark}[One-dimensional analogue]\label{rem:lub_1D}
	For a one-dimensional interface $h=h(x,t)$, $x\in\TT$, the explicit leading-order thin-film model
	\eqref{eq:lub_PDE_explicit} reads
	\begin{equation}\label{eq:lub_PDE_explicit_1D}
		\partial_t h+\sqrt{\delta}\,\partial_x\Big((1+\varepsilon h)\,\partial_x\mu\Big)=0,
		\qquad
		\mu:=\Theta\,\partial_{xx}\partial_t h-\chi h-\frac{\lambda}{4}\,\partial_x^{4}h.
	\end{equation}
	Equivalently,
	\begin{equation}\label{eq:lub_PDE_explicit_1D_split}
		\partial_t h+\sqrt{\delta}\,\partial_x^{2}\mu
		=-\sqrt{\delta}\,\varepsilon\,\partial_x\big(h\,\partial_x\mu\big),
		\qquad \mu \text{ as in \eqref{eq:lub_PDE_explicit_1D}}.
	\end{equation}
	In particular, the left-hand side is the linear part, while the right-hand side collects the quadratic transport
	coming from the variable mobility.
\end{remark}

\section{Notation, functional spaces, and basic inequalities}
\label{sec:notation_spaces}

We work on the one-dimensional torus $\TT=\RR/(2\pi\ZZ)$. For $f:\TT\to\CC$ we use the Fourier series convention
\[
f(x)=\sum_{k\in\ZZ}\widehat f(k)\,\frac{e^{ikx}}{\sqrt{2\pi}},
\qquad
\widehat f(k)=\int_{\TT} f(x)\,\frac{e^{-ikx}}{\sqrt{2\pi}}\,dx.
\]
The spatial mean of $f$ is $\widehat f(0)$. Unless explicitly stated otherwise, all functions considered in this paper are assumed to have zero spatial mean, i.e.\ $\widehat f(0)=0$ (this condition is preserved by the evolutions studied here). Throughout the paper, $C>0$ denotes a generic constant whose value may change from line to line.
When relevant, we indicate the dependence on parameters, e.g.\ $C=C(\chi,\Theta,\sigma,\lambda)$. \medskip

\subsubsection*{Functional spaces and inequalities} 
For $s\ge 0$ we define the periodic Wiener space for zero mean functions. To streamline notation, we do not introduce separate symbols for mean-zero subspaces.
Hence, whenever we write $f\in A^s(\TT)$ we implicitly assume $\widehat f(0)=0$. Therefore,
\[
A^s(\TT):=\Bigl\{f\in \mathcal D'(\TT):\ \|f\|_{A^s}<\infty\Bigr\},
\qquad
\|f\|_{A^s}:=\sum_{k\in\ZZ}|k|^s\,|\widehat f(k)|.
\]
If $s\in\NN$, this agrees with the characterization
\[
A^s(\TT)=\Bigl\{f:\ \|\widehat{\partial_x^s f}\|_{\ell^1}<\infty\Bigr\}.
\]

We will repeatedly use the following well-known standard inequalities, \cite{BG-B2019}.

\medskip
\noindent\emph{(i) \textcolor{black}{Product} estimates.}
The \textcolor{black}{norm of the} space $A^0(\TT)$ \textcolor{black}{verifies the following product inequality}:
\[
\|fg\|_{A^0}\le \|f\|_{A^0}\,\|g\|_{A^0}.
\]
More generally, for $s\ge 0$ one has the Leibniz-type bound
\[
\|fg\|_{A^s}\le C_s\bigl(\|f\|_{A^s}\|g\|_{A^0}+\|f\|_{A^0}\|g\|_{A^s}\bigr).
\]
In particular, if $s\ge 1$ and $f,g\in A^s\textcolor{black}{(\TT)}$, then $fg\in A^s\textcolor{black}{(\TT)}$ and
$\|fg\|_{A^s}\le C_s\|f\|_{A^s}\|g\|_{A^s}$.

\medskip
\noindent\emph{(ii) Poincar\'e-type inequalities for mean-zero functions.}
Under the standing assumption $\widehat f(0)=0$, for $0\le m\le n$,
\[
\|f\|_{A^m}\le \|f\|_{A^n},
\qquad\text{and in particular}\qquad
\|f\|_{A^0}\le \|f\|_{A^1}.
\]

\medskip
\noindent\emph{(iii) Derivatives.}
For integers $j\ge 0$ and \textcolor{black}{zero-mean functions, we have that}
\[
\|\partial_x^j f\|_{A^s}=\|f\|_{A^{s+j}}.
\]

\medskip
\noindent\emph{(iv) Fourier multipliers.}
If $M$ is a Fourier multiplier with symbol $m(k)$ such that
$\sup_{k\in\ZZ\setminus\{0\}} |m(k)|\,|k|^{-\alpha}\le C$ for some $\alpha\in\RR$, then
\[
\|Mf\|_{A^s}\le C\,\|f\|_{A^{s+\alpha}}.
\]
In particular, for
\[
\textcolor{black}{\mathcal G_0}:=\Lambda\tanh(\Lambda),\qquad \widehat{\textcolor{black}{\mathcal G_0} f}(k)=|k|\tanh(|k|)\,\widehat f(k),
\]
we use that 
\textcolor{black}{$$
\tanh(1)\leq\tanh(|k|)\le 1, \qquad k\in\ZZ\setminus\{0\},
$$} to obtain, for all $s\ge 0$ and a zero mean function $f$,
\[
\|\textcolor{black}{\mathcal G_0}f\|_{A^s}\le \|f\|_{A^{s+1}},
\qquad
\|\textcolor{black}{\mathcal G_0} f\|_{A^s}\ge \tanh(1)\,\|f\|_{A^{s+1}}.
\]

\subsubsection*{Estimates on the bilinear mappings} \textcolor{black}{ We first record the bilinear estimates that will be used throughout the well-posedness analysis.}
\textcolor{black}{\begin{lemma}\label{LemmaBilinear}
We have the estimates
$$
\|\textcolor{black}{\mathcal G_0}\big(h\,\textcolor{black}{\mathcal G_0}\partial_{xx}V\big)+\partial_x\big(h\,\partial_{xxx}V\big)\|_{A^0}\leq C\,\|h\|_{A^1}\,\|V\|_{A^3},
$$
$$
\|\textcolor{black}{\mathcal G_0}\big(h\,\textcolor{black}{\mathcal G_0}V\big)+\partial_x\big(h\,\partial_{x}V\big)\|_{A^0}\leq C\,\|h\|_{A^1}\,\|V\|_{A^1}.
$$
$$
\|\textcolor{black}{\mathcal G_0}\big(h\,\textcolor{black}{\mathcal G_0}\partial_{xxxx}V\big)+\partial_x\big(h\,\partial_{xxxxx}V\big)\|_{A^0}\leq C\,\|h\|_{A^1}\,\|V\|_{A^5},
$$
\end{lemma}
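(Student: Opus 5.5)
The key point is a cancellation between the two pieces of each bilinear expression; estimating them separately with the product and multiplier bounds of Section~\ref{sec:notation_spaces} loses one derivative. All three estimates are instances of one bound for
\[
B(h,W):=\mathcal G_0\big(h\,\mathcal G_0 W\big)+\partial_x\big(h\,\partial_x W\big),
\]
namely $\|B(h,W)\|_{A^0}\le C\|h\|_{A^1}\|W\|_{A^1}$ for zero-mean $h,W$. Indeed, the first estimate is the case $W=\partial_{xx}V$, the second is $W=V$, and the third is $W=\partial_{x}^4V$, since $\partial_x(h\,\partial_{xxx}V)=\partial_x(h\,\partial_x W)$ when $W=\partial_{xx}V$, and similarly for the others. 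Property (iii) then gives $\|W\|_{A^1}=\|V\|_{A^3}$, $\|V\|_{A^1}$ and $\|V\|_{A^5}$, respectively.

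To prove the bound for $B$, I will write it on the Fourier side. With $m(k)=|k|\tanh|k|$, up to the $(2\pi)^{-1/2}$ normalization we have
\[
\widehat{B(h,W)}(k)=\sum_{j}\big(m(k)m(j)-kj\big)\,\widehat h(k-j)\,\widehat W(j).
\]
Only pairs with $j\neq0$ and $k\neq j$ contribute, because $h$ and $W$ have zero mean. It therefore suffices to establish the pointwise symbol bound
\[
|m(k)m(j)-kj|\le C\,|k-j|\,|j|\qquad (j\neq0,\ k\neq j).
\]
Once this holds, Young's inequality for $\ell^1$ convolutions gives
\[
\|B(h,W)\|_{A^0}\le C\sum_{k,j}|k-j||\widehat h(k-j)|\,|j||\widehat W(j)|=C\|h\|_{A^1}\|W\|_{A^1}.
\]

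The main step is the symbol bound, which I split according to the signs of $k$ and $j$.

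\emph{Opposite signs, or $k=0$.} Here $|k-j|=|k|+|j|\ge |k|$. Since $\tanh\le1$, the symbol is at most $2|k||j|\le 2|k-j||j|$.

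\emph{Same sign.} Here $kj=|k||j|$ and the symbol equals $|k||j|\big(1-\tanh|k|\tanh|j|\big)$, so the cancellation is exploited. I will use
\[
1-\tanh a\tanh b\le(1-\tanh a)+(1-\tanh b)\le 2e^{-2a}+2e^{-2b}.
\]
The term $|k||j|e^{-2|k|}$ is bounded by $C|j|\le C|k-j||j|$, using $|k-j|\ge1$. The term $|k||j|e^{-2|j|}$ is bounded by $C|k|\le C(|k-j|+|j|)\le 2C|k-j||j|$, using $|j|\ge1$ and $|k-j|\ge1$.

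Combining the two cases yields the symbol bound and hence all three estimates of the lemma. I expect the only delicate point to be this same-sign case, where the exponential decay of $1-\tanh$ is what recovers the derivative that would otherwise be lost.
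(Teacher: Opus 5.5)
Your argument is correct and is essentially the paper's proof. You pass to the Fourier side and isolate the symbol $|k||j|\big(\tanh|k|\tanh|j|-\sgn(k)\sgn(j)\big)$. You control the opposite-sign region by $|k|\le|k-j|$ and the same-sign region by $1-\tanh a\tanh b\le 2e^{-2a}+2e^{-2b}$, and obtain the other estimates by substituting derivatives. The differences are cosmetic: you take the $A^1\times A^1$ bound as the base case (the paper proves the $A^1\times A^3$ one and deduces the others via $V=\partial_{xx}^{-1}v$ and $V=\partial_{xx}v$), and you package everything as one pointwise bound $\le C|k-j||j|$ using $|j|,|k-j|\ge1$, where the paper splits the symbol additively, sums the pieces, and finishes with Poincar\'e.
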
 
\begin{proof}
We are going to focus on the proof of the first estimate. Once this is achieved, the other two can be recovered from it by choosing $V$ appropriately.
		Set
	\[
	I(h,V):=\textcolor{black}{\mathcal G_0}\big(h\,\textcolor{black}{\mathcal G_0}\partial_{xx}V\big)+\partial_x\big(h\,\partial_{xxx}V\big).
	\]
	We now compute $\widehat I(k)$ and bound $\|I(h,V)\|_{A^0}$ in detail.
First, we notice that
	\begin{align}
		\widehat I(k)
		&=
		|k|\tanh(|k|)\sum_{m\in\ZZ}\widehat h(m)\,\widehat{\textcolor{black}{\mathcal G_0}\partial_{xx}V}(k-m)
		+\sum_{m\in\ZZ}k(k-m)^3\widehat h(m)\widehat V(k-m)\notag\\
		&=\sum_{m\in\ZZ}\Big[-|k||k-m|^3\tanh(|k|)\tanh(|k-m|)+k(k-m)^3\Big]\widehat h(m)\widehat V(k-m).
		\label{eq:Ihat_integrated_raw}
	\end{align}
	Rewrite $k(k-m)^3=|k||k-m|^3\,\sgn(k)\sgn(k-m)$ to obtain
	\begin{equation}\label{eq:Ihat_integrated_factor}
		\widehat I(k)=\sum_{m\in\ZZ}|k||k-m|^3
		\Big[\sgn(k)\sgn(k-m)-\tanh(|k|)\tanh(|k-m|)\Big]\widehat h(m)\widehat V(k-m).
	\end{equation}
We decompose the symbol bracket into two contributions, isolating the discontinuous
(sign) interaction from the smooth remainder:
\[
\sgn(k)\sgn(k-m)-\tanh(|k|)\tanh(|k-m|)
=
\underbrace{\big(\sgn(k)\sgn(k-m)-1\big)}_{=:A_{k,m}}
+
\underbrace{\big(1-\tanh(|k|)\tanh(|k-m|)\big)}_{=:B_{k,m}}.
\]
Accordingly, we write $\widehat I=\widehat I_A+\widehat I_B$, where $\widehat I_A$ (resp.\ $\widehat I_B$)
is obtained from \eqref{eq:Ihat_integrated_factor} by replacing the bracket with $A_{k,m}$ (resp.\ $B_{k,m}$).
We estimate these two terms separately. We note that $A_{k,m}=0$ unless $\sgn(k)\neq\sgn(k-m)$, in which case $|A_{k,m}|\le 2$ and necessarily $|k|\le |m|$.
Therefore,
\[
|\widehat I_A(k)|
\le 2\sum_{m\in\ZZ}|k||k-m|^3\,\mathbf 1_{\{|k|\le |m|\}}\,|\widehat h(m)|\,|\widehat V(k-m)|
\le 2\sum_{m\in\ZZ}|m|\,|k-m|^3\,|\widehat h(m)|\,|\widehat V(k-m)|.
\]
Summing over $k$ and changing variables $j=k-m$ yields
\[
\|I_A\|_{A^0}
\le 2\sum_{m\in\ZZ}|m||\widehat h(m)|\sum_{j\in\ZZ}|j|^3|\widehat V(j)|
=2\,\|h\|_{A^1}\,\|V\|_{A^3}.
\]
Next, let us estimate $I_{B}$. To that purpose, for $a,b\ge 0$ we use the elementary bound
\[
1-\tanh a\,\tanh b=(1-\tanh a)+\tanh a(1-\tanh b)\le (1-\tanh a)+(1-\tanh b),
\]
together with $1-\tanh r=\frac{2}{e^{2r}+1}\le 2e^{-2r}$, to obtain
\[
|B_{k,m}|\le C\big(e^{-2|k|}+e^{-2|k-m|}\big).
\]
Inserting this bound into \eqref{eq:Ihat_integrated_factor} gives
\[
|\widehat I_B(k)|
\le C\sum_{m\in\ZZ}|k||k-m|^3\big(e^{-2|k|}+e^{-2|k-m|}\big)\,|\widehat h(m)|\,|\widehat V(k-m)|
=:J_1(k)+J_2(k),
\]
where $J_1$ corresponds to the factor $e^{-2|k|}$ and $J_2$ to $e^{-2|k-m|}$.
For $J_1$, since $\sum_{k\in\ZZ}|k|e^{-2|k|}<\infty$,
\[
\sum_{k\in\ZZ}J_1(k)
\le C\Big(\sum_{k\in\ZZ}|k|e^{-2|k|}\Big)\sum_{m\in\ZZ}|\widehat h(m)|\sum_{j\in\ZZ}|j|^3|\widehat V(j)|
\le C\,\|h\|_{A^0}\,\|V\|_{A^3}.
\]
For $J_2$, set $j=k-m$, so that $k=m+j$ and $|k|\le |m|+|j|$:
\begin{align*}
	\sum_{k\in\ZZ}J_2(k)
	&\le C\sum_{m\in\ZZ}\sum_{j\in\ZZ}(|m|+|j|)\,|j|^3e^{-2|j|}\,|\widehat h(m)|\,|\widehat V(j)|\\
	&\le C\|h\|_{A^1}\|V\|_{A^3}
	+ C\|h\|_{A^0}\sum_{j\in\ZZ}|j|^4e^{-2|j|}|\widehat V(j)|.
\end{align*}
Since $|j|e^{-2|j|}\le C$, we have
$\sum_{j\in\ZZ}|j|^4e^{-2|j|}|\widehat V(j)|
\le C\sum_{j\in\ZZ}|j|^3|\widehat V(j)|
= C\|V\|_{A^3}$,
and thus
\[
\|I_B\|_{A^0}\le C\big(\|h\|_{A^1}+\|h\|_{A^0}\big)\|V\|_{A^3}.
\]
	Hence
	\[
	\|I_B\|_{A^0}\le C(\|h\|_{A^1}+\|h\|_{A^0})\|V\|_{A^3}.
	\]
	Using Poincar\'e on mean-zero functions, $\|h\|_{A^0}\le \|h\|_{A^1}$, we conclude
	\begin{equation}\label{eq:I_bound_integrated}
		\|I(h,V)\|_{A^0}\le C\,\|h\|_{A^1}\,\|V\|_{A^3}.
	\end{equation}
	\textcolor{black}{
Since $\|h\|_{A^0}\leq\|h\|_{A^1}$ for the full norms defined above, we conclude that
	\begin{equation}\label{eq:I_bound_integrated}
		\|I(h,V)\|_{A^0}\le C\,\|h\|_{A^1}\,\|V\|_{A^3}.
	\end{equation}
To obtain the second inequality, first remove the irrelevant constant mode
of $v$ and apply the first one with $V=\partial_{xx}^{-1}v$, then
$\|\partial_{xx}^{-1}v\|_{A^3}\leq\|v\|_{A^1}$. To obtain the third
inequality, apply the first one with $V=\partial_{xx}v$ and use
$\|\partial_{xx}v\|_{A^3}\leq\|v\|_{A^5}$.}
\end{proof}
}

\section{Well-posedness of the weakly nonlinear models for the small slope regime}\label{sec:wp1}
We now study the well-posedness of the weakly nonlinear models introduced in the previous section. To streamline the presentation, we restrict to the one-dimensional setting. \textcolor{black}{Due to the change in the order of the PDE, the cases $\lambda=0$ and $\lambda\neq0$ are very different. In fact, the case $\lambda=0$ could be regarded intuitively as an ODE in a Banach space while the case $\lambda\neq0$ cannot. Thus, we treat both cases in separate results with different statements. Moreover when $\lambda>0$, the elastic
term supplies fifth-order spatial dissipation before inversion of the operator
acting on $\partial_t h$, and this yields the additional space-time control.}\medskip 

Our main theorem is as follows.

\begin{theorem}\label{thm:1}
	\label{thm:wp_A1}
	Let $\lambda=0$ and fix $\chi,\Theta,\sigma>0$. Assume that $h_0\in A^1\textcolor{black}{(\TT)}$ has zero mean.
	Then there exist constants $\mathcal C=\mathcal C(\chi,\Theta,\sigma)>0$ and
	$T=T(\chi,\Theta,\sigma,\|h_0\|_{A^1})>0$ such that, if
	\[
	\|h_0\|_{A^1}\le \mathcal C,
	\]
	there exists a (mild) solution
	\[
	h\in C([0,T],A^1\textcolor{black}{(\TT)})
	\]
	to \eqref{eq:model_sigma_LHS_full_multiplier_1D} satisfying the integral formulation
	\[
	h(t)=h_0+\int_0^t (\mathcal L_{h(s)})^{-1}\,\mathcal N(h(s))\,ds,
	\qquad t\in[0,T],
	\]
	where
	\[
	\mathcal N(h)
	=
	-\chi\,\Lambda\tanh(\Lambda)\,h
	+\sigma\chi\Big(\Lambda\tanh(\Lambda)\big(h\,\Lambda\tanh(\Lambda)\,h\big)
	+\partial_x\big(h\,\partial_x h\big)\Big),
	\]
	and, for a given profile $h$, the operator $\mathcal L_h$ is defined by
	\[
	\mathcal{L}_h U
	=
	\big(1-\Theta\,\Lambda\tanh(\Lambda)\,\partial_{xx}\big)U
	+\sigma\Theta\Big(\Lambda\tanh(\Lambda)\big(h\,\Lambda\tanh(\Lambda)\,\partial_{xx}U\big)
	+\partial_x\big(h\,\partial_{xxx}U\big)\Big).
	\]
		Moreover, \textcolor{black}{there exists another constant $\mathcal C_2=\mathcal C_2(\lambda,\chi,\Theta,\sigma)$ such that if $h_0\in A^3\textcolor{black}{(\TT)}$
with
$$
\|h_0\|_{A^0}+\Theta\tanh(1)\|h_0\|_{A^3}\leq \mathcal{C}_2,
$$
then the corresponding solution satisfies
	\[
	h\in C([0,\infty),A^3\textcolor{black}{(\TT)}),
	\]
	and the decay property
	\[
	\limsup_{t\to\infty}\|h(t)\|_{A^3}=0.
	\]
}
\end{theorem}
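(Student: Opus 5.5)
The plan is to treat the equation, for $\lambda=0$, as an ODE in the Banach space $A^1(\TT)$: $\partial_t h=F(h):=(\mathcal L_h)^{-1}\mathcal N(h)$. The first step is invertibility of $\mathcal L_h$. The principal part $L_0:=1-\Theta\mathcal G_0\partial_{xx}$ has symbol $1+\Theta|k|^3\tanh(|k|)\ge 1+\Theta\tanh(1)|k|^3$. Hence $L_0^{-1}$ maps $A^0$ boundedly into $A^3$, with
\[
\|L_0^{-1}g\|_{A^0}+\Theta\tanh(1)\|L_0^{-1}g\|_{A^3}\le\|g\|_{A^0}.
\]
The perturbation $\sigma\Theta\,I(h,U)$ is exactly the first bilinear form of Lemma~\ref{LemmaBilinear}, so $\|I(h,U)\|_{A^0}\le C\|h\|_{A^1}\|U\|_{A^3}$. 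Writing $\mathcal L_h=L_0(1+\sigma\Theta L_0^{-1}I(h,\cdot))$ and using a Neumann series in $A^3$, $\mathcal L_h$ is invertible from $A^0$ onto $A^3$ provided $\sigma\Theta C\|h\|_{A^1}/(\Theta\tanh 1)<1/2$. In that case
\[
\|\mathcal L_h^{-1}g\|_{A^3}\le C\|g\|_{A^0}.
\]
This smallness requirement is where the constant $\mathcal C$ comes from.

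Next I would bound $\mathcal N$. The second estimate of Lemma~\ref{LemmaBilinear} together with (iv) gives
\[
\|\mathcal N(h)\|_{A^0}\le C\|h\|_{A^1}+C\sigma\|h\|_{A^1}^2.
\]
Combining, $F$ maps the ball $\{\|h\|_{A^1}\le 2\mathcal C\}$ into $A^3\subset A^1$. For Lipschitz continuity I would use the resolvent identity
\[
\mathcal L_{h_1}^{-1}-\mathcal L_{h_2}^{-1}=\mathcal L_{h_1}^{-1}\,\sigma\Theta\, I(h_2-h_1,\cdot)\,\mathcal L_{h_2}^{-1},
\]
bilinearity of $I$, and the bilinear bounds on $\mathcal N(h_1)-\mathcal N(h_2)$. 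Together these show $F$ is locally Lipschitz on that ball in $A^1$ (even into $A^3$). Picard iteration on $C([0,T],A^1)$ then gives existence and uniqueness of the mild solution for some $T$ depending on $\|h_0\|_{A^1}$. Since $F(h)\in A^3$, the integral formula preserves $A^3$ regularity when $h_0\in A^3$, so $h\in C([0,T],A^3)$.

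For globality and decay I would work with the equation in the form
\[
L_0\partial_t h+\chi\mathcal G_0h=\sigma\chi\,\mathrm{Bil}(h,h)-\sigma\Theta\, I(h,\partial_t h),
\]
and test it on the Fourier side against $\sgn(\widehat h(k))$, in the Wiener sense. The left side gives
\[
\frac{d}{dt}\Big(\|h\|_{A^0}+\Theta\sum_k|k|^3\tanh|k|\,|\widehat h(k)|\Big)+\chi\tanh(1)\|h\|_{A^1}.
\]
The energy $E(t)$ is comparable to $\|h\|_{A^0}+\Theta\tanh(1)\|h\|_{A^3}$, which explains the form of the smallness condition. The right side is bounded by $C\sigma\|h\|_{A^1}^2+C\sigma\Theta\|h\|_{A^1}\|\partial_t h\|_{A^3}$. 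Using the invertibility bound, $\|\partial_t h\|_{A^3}\le C\|\mathcal N(h)\|_{A^0}\le C\|h\|_{A^1}$. Hence
\[
E'+\big(\chi\tanh 1-C\|h\|_{A^1}\big)\|h\|_{A^1}\le0.
\]
Since $\|h\|_{A^1}\le\|h\|_{A^3}\le CE$, a continuity/bootstrap argument shows that $E$ stays small and nonincreasing for all time, so the local solution extends globally in $A^3$.

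This step is the main obstacle. The dissipation only controls $\|h\|_{A^1}$ while the energy contains $\|h\|_{A^3}$, so Gronwall does not directly give $E\to0$. To get decay I would add to the energy inequality an estimate on $\partial_t h$, namely $\partial_t h=-\chi L_0^{-1}\mathcal G_0h+{}$quadratic. Its linear symbol $-\chi|k|\tanh|k|/(1+\Theta|k|^3\tanh|k|)$ is negative on every mode $k\neq0$, with rate bounded below by $c_K>0$ on $|k|\le K$ and tending to zero only like $|k|^{-2}$ at high frequency. I would combine this with $\int_0^\infty\|h\|_{A^1}\,dt<\infty$, which follows from integrating the energy inequality, to show that each Fourier mode decays and that the high-frequency tail of $\sum|k|^3|\widehat h(k)|$ is uniformly small. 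The tail smallness would come from dominated convergence, using that $E$ is nonincreasing. This gives $\limsup_{t\to\infty}\|h(t)\|_{A^3}=0$. If uniform tail control fails, I would at least conclude decay in $A^1$ and $A^0$, and refine the weights in the energy to upgrade to $A^3$.
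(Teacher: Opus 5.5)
Your local theory and global bootstrap are correct and follow the paper's route. The Neumann series for $\mathcal L_h=L_0\bigl(1+\sigma\Theta L_0^{-1}I(h,\cdot)\bigr)$ is the paper's first fixed point for $U$ with $h$ frozen, and Picard iteration for $\partial_t h=\mathcal L_h^{-1}\mathcal N(h)$ is its second fixed point. Your energy with weight $|k|^3\tanh|k|$ is exactly the quantity the Fourier computation differentiates.

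The gap is in the decay step. You are right that monotonicity of $E$ together with $\int_0^\infty\|h\|_{A^1}\,dt<\infty$ does not by itself give $E\to0$; the paper's proof simply asserts at this point that the monotone quantity tends to zero. Your fix, however, does not close the gap. ``$E$ nonincreasing'' only gives the uniform bound $\sum_k|k|^3|\widehat h(k,t)|\le CE(0)$, which is not a $t$-independent summable majorant. So dominated convergence cannot be invoked. Nothing in your argument prevents $A^3$-mass from migrating to ever higher frequencies, where the linear damping rate $\sim\chi/(\Theta|k|^2)$ degenerates. The fallback of ``refining the weights'' is not an argument.

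The missing step uses a bound you already derived. Under the smallness, $\|\partial_t h(t)\|_{A^3}\le C\|\mathcal N(h(t))\|_{A^0}\le C\|h(t)\|_{A^1}$. Hence the integrated energy inequality gives $\int_0^\infty\|\partial_t h\|_{A^3}\,dt\le C\int_0^\infty\|h\|_{A^1}\,dt\le CE(0)<\infty$. It follows that $\|h(t)-h(s)\|_{A^3}\le C\int_s^t\|h\|_{A^1}\,d\tau\to0$ as $s,t\to\infty$, so $h(t)\to h_\infty$ in $A^3$. Since $\int_0^\infty\|h\|_{A^1}\,dt<\infty$ forces $\liminf_{t\to\infty}\|h(t)\|_{A^1}=0$, we get $\|h_\infty\|_{A^1}=0$, and $h_\infty=0$ by the zero-mean condition. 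Therefore $\|h(t)\|_{A^3}\le C\int_t^\infty\|h\|_{A^1}\,d\tau\to0$.

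Equivalently, $g(k):=|k|^3\bigl(|\widehat{h_0}(k)|+\int_0^\infty|\widehat{\partial_t h}(k,s)|\,ds\bigr)$ is the summable majorant your dominated-convergence argument was missing. With either version, the mode-by-mode analysis of the linear symbol is unnecessary.
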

\textcolor{black}{
\begin{remark}\label{rem:Lh_invertible_minimal}
Lemma~\ref{LemmaBilinear} and \eqref{eq:L0_inv_integrated} show that, if
$\|h\|_{A^1}$ is sufficiently small, then
$\mathcal L_h:A^3(\TT)\to A^0(\TT)$ is boundedly invertible on zero-mean
functions. Indeed,  we show that if 
$\mathcal L_h$ is a small perturbation of $\mathcal L_0$, we can apply a Banach fixed point argument with
\[
\|\mathcal L_h^{-1}F\|_{A^0}
+\Theta\tanh(1)\|\mathcal L_h^{-1}F\|_{A^3}
\le C\|F\|_{A^0}.
\]
Thus the integral formulation in Theorem~\ref{thm:wp_A1} is well defined.
\end{remark}
}
\begin{proof}[Proof of Theorem \ref{thm:1}]
	Set $\textcolor{black}{\mathcal G_0}:=\Lambda\tanh(\Lambda)$, i.e.\ the Fourier multiplier with symbol $|k|\tanh(|k|)$, and define
\[
\mathcal L_0 := 1-\Theta\,\textcolor{black}{\mathcal G_0}\partial_{xx}.
\]
We work on $\TT$ and assume that the initial datum has zero mean, $\widehat h_0(0)=0$.
This property is preserved by the evolution, since every term on the right-hand side of
\eqref{eq:model_sigma_LHS_full_multiplier_1D} has zero spatial mean.
For $T>0$ and 
\textcolor{black}{
$$
\|h_0\|_{A^1}\le \mathcal C
$$
with
}
$\mathcal C>0$ define
	\[
	\mathbb X_T^{\mathcal C}
	:=
	\Big\{h\in C([0,T];A^0)\cap L^\infty(0,T;A^1):\ h(0)=h_0,\ 
	\|h\|_{L^\infty(0,T;A^1)}\le \textcolor{black}{2}\mathcal C\Big\}.
	\]
	\textcolor{black}{Thus the initial datum lies in the half-radius ball,
	leaving the margin needed in the self-map argument.}
	With $\lambda=0$, equation \eqref{eq:model_sigma_LHS_full_multiplier_1D} can be written as
	\begin{equation}\label{eq:ql_form_integrated}
		\mathcal L_h(\partial_t h)=\mathcal N(h),
	\end{equation}
	where
	\[
	\mathcal N(h):=-\chi\,\textcolor{black}{\mathcal G_0}h+\sigma\chi\Big(\textcolor{black}{\mathcal G_0}(h\,\textcolor{black}{\mathcal G_0}h)+\partial_x(h\,\partial_x h)\Big),
	\]
	and
	\[
	\mathcal L_h U
	:=
	\mathcal L_0U
	+\sigma\Theta\Big(\textcolor{black}{\mathcal G_0}\big(h\,\textcolor{black}{\mathcal G_0}\partial_{xx}U\big)+\partial_x\big(h\,\partial_{xxx}U\big)\Big).
	\]
	
	\medskip
	\underline{Step 1:  Invertibility of $\mathcal L_0$.}
	\textcolor{black}{Let $F\in A^0(\TT)$ be a function with zero integral mean} and set $U=\mathcal L_0^{-1}F$. For $k\neq 0$,
	\[
	\widehat{(\mathcal L_0U)}(k)=\ell_0(k)\,\widehat U(k),
	\qquad
	\ell_0(k)=1+\Theta |k|^3\tanh(|k|),
	\]
	hence $\widehat U(k)=\widehat F(k)/\ell_0(k)$. Since $\ell_0(k)\ge 1$,
	\[
	\|U\|_{A^0}\le \|F\|_{A^0}.
	\]
	Moreover, using $\tanh(|k|)\ge \tanh(1)$ for $k\neq 0$,
	\[
	\Theta\tanh(1)\,|k|^3|\widehat U(k)|
	\le \frac{\Theta\tanh(1)\,|k|^3}{1+\Theta|k|^3\tanh(|k|)}\,|\widehat F(k)|
	\le |\widehat F(k)|,
	\]
	and summing in $k\neq 0$ yields
	\begin{equation}\label{eq:L0_inv_integrated}
		\|\mathcal L_0^{-1}F\|_{A^0}+\Theta\tanh(1)\,\|\mathcal L_0^{-1}F\|_{A^3}
		\le C\,\|F\|_{A^0}.
	\end{equation}
	
	\medskip
	\underline{Step 2: first fixed point: solve $\mathcal L_h U=\mathcal N(h)$ for fixed $h$.}
	Fix $h\in\mathbb X_T^{\mathcal C}$. For $V\in L^\infty(0,T;A^3)$ define $S_h(V)=U$ by
	\begin{equation}\label{eq:Sh_integrated}
		\mathcal L_0U
		=
		\mathcal N(h)-\sigma\Theta\Big(\textcolor{black}{\mathcal G_0}\big(h\,\textcolor{black}{\mathcal G_0}\partial_{xx}V\big)+\partial_x\big(h\,\partial_{xxx}V\big)\Big).
	\end{equation}

Invoking Lemma \ref{LemmaBilinear}, we find
$$
\|\textcolor{black}{\mathcal G_0}\big(h\,\textcolor{black}{\mathcal G_0}\partial_{xx}V\big)+\partial_x\big(h\,\partial_{xxx}V\big)\|_{A^0}\leq C\,\|h\|_{A^1}\,\|V\|_{A^3}.
$$
	
Now apply \eqref{eq:L0_inv_integrated} to \eqref{eq:Sh_integrated}.
	The second estimate in Lemma~\ref{LemmaBilinear} gives
	$\|\mathcal N(h)\|_{A^0}\le C(\|h\|_{A^1}+\|h\|_{A^1}^2)$, which together with \eqref{eq:I_bound_integrated} shows that
	\begin{align}
		\|S_h(V)\|_{A^0}+\Theta\tanh(1)\|S_h(V)\|_{A^3}
		&\le C\Big(\|\mathcal N(h)\|_{A^0}+\sigma\Theta\|I(h,V)\|_{A^0}\Big)\notag\\
		&\le C\Big(\|h\|_{A^1}+\|h\|_{A^1}^2+\|h\|_{A^1}\|V\|_{A^3}\Big),
		\label{eq:Sh_est_integrated}
	\end{align}
  for each $t\in[0,T]$,
	Similarly, by linearity in $V$ of the right-hand side of \eqref{eq:Sh_integrated} and \eqref{eq:I_bound_integrated},
	\[
	\|S_h(V_1)-S_h(V_2)\|_{A^3}
	\le C\,\|h\|_{A^1}\,\|V_1-V_2\|_{A^3}.
	\]
	Therefore, if $\mathcal C>0$ is chosen so that $C\mathcal C<1$, then $S_h$ is a contraction on $L^\infty(0,T;A^3)$.
	Hence there exists a unique fixed point $U_h\in L^\infty(0,T;A^3)$ such that
	\begin{equation}\label{eq:Uh_fixed_integrated}
		U_h=S_h(U_h)\qquad\Longleftrightarrow\qquad \mathcal L_hU_h=\mathcal N(h).
	\end{equation}
	Moreover, taking $\mathcal C$ smaller if needed, \eqref{eq:Sh_est_integrated} at $V=U_h$ yields
	\begin{equation}\label{eq:Uh_bound_integrated}
		\|U_h\|_{A^0}+\frac{\Theta\tanh(1)}{2}\|U_h\|_{A^3}
		\le C\,\|h\|_{A^1}\big(1+\|h\|_{A^1}\big).
	\end{equation}
	
\underline{Step 3: the second fixed point: construction of $h$.}
Define $\mathcal T$ on $\mathbb X_T^{\mathcal C}$ by
\[
(\mathcal Th)(t):=
h_0+\int_0^tU_h(s)\,ds,
\qquad t\in[0,T].
\]
Since $U_h\in L^\infty(0,T;A^3)$, the map $\mathcal Th$ belongs to
$C([0,T];A^1)$.
	By \eqref{eq:Uh_bound_integrated},
	\[
	\|\mathcal Th\|_{L^\infty(0,T;A^1)}
	\le \|h_0\|_{A^1}
	+TC\,\|h\|_{L^\infty(0,T;A^1)}\big(1+\|h\|_{L^\infty(0,T;A^1)}\big).
	\]
	Hence, choosing $T>0$ sufficiently small (depending on $\|h_0\|_{A^1}$ and parameters) and $\mathcal C$ small,
	we have $\mathcal T(\mathbb X_T^{\mathcal C})\subset \mathbb X_T^{\mathcal C}$.	For the contraction arguement, take $h_1,h_2\in\mathbb X_T^{\mathcal C}$ and denote $\delta h=h_1-h_2$,
	$\delta U=U_{h_1}-U_{h_2}$. Subtracting the identities $\mathcal L_{h_j}U_{h_j}=\mathcal N(h_j)$,
	one obtains an equation of the form
	\[
	\mathcal L_0(\delta U)=F(\delta h)+G(\delta h,U_{h_1})+H(h_2,\delta U),
	\]
	where each term is linear in $\delta h$ or $\delta U$.
	Using again \eqref{eq:L0_inv_integrated}, the bound \eqref{eq:I_bound_integrated}, and Wiener algebra estimates,
	we obtain
	\[
	\|\delta U\|_{A^0}+\frac{\Theta\tanh(1)}{2}\|\delta U\|_{A^3}
	\le C\,\|\delta h\|_{A^1},
	\]
	with $C$ depending on $\mathcal C$ (hence on the fixed parameters) but not on $T$.
	Therefore,
	\[
	\|\mathcal Th_1-\mathcal Th_2\|_{L^\infty(0,T;A^1)}
	\le T\,C\,\|h_1-h_2\|_{L^\infty(0,T;A^1)}.
	\]
	Taking $T$ smaller so that $TC<1$, Banach's fixed point theorem gives a unique fixed point
	$h\in\mathbb X_T^{\mathcal C}$ with $\mathcal Th=h$, i.e.\ a mild solution on $[0,T]$.
	
	\medskip
	\underline{Step 4: Additional $A^3$ regularity, global existence, and decay.}
	\textcolor{black}{For the local regularity statement, assume now that
	$h_0\in A^3$ and $\|h_0\|_{A^1}\leq\mathcal C$, no smallness of the
	$A^3$ norm is used at this stage.}
	From \eqref{eq:Uh_fixed_integrated} we have $\partial_t h=U_h\in L^\infty(0,T;A^3)$, hence
	\[
	h(t)=h_0+\int_0^t \partial_s h(s)\,ds \in A^3
	\quad\text{for all }t\in[0,T],
	\]
	and thus $h\in C([0,T];A^3)$. \medskip
	
	We next derive an a priori estimate that extends the solution globally.
	Taking Fourier coefficients in \eqref{eq:ql_form_integrated} and using
	$\frac{d}{dt}|\widehat h(k)|\le |\widehat{\partial_t h}(k)|$, we obtain
	\begin{align}
		\frac{d}{dt}\|h\|_{A^0}
		&\le \|\partial_t h\|_{A^0}.
		\label{eq:A0_time_derivative}
	\end{align}
	Now, from \eqref{eq:ql_form_integrated} we can write
	\[
	\mathcal L_0(\partial_t h)=\mathcal N(h)-\sigma\Theta I(h,\partial_t h).
	\]
	Applying \eqref{eq:L0_inv_integrated} with $F=\mathcal N(h)-\sigma\Theta I(h,\partial_t h)$, and using
	\eqref{eq:I_bound_integrated} with $V=\partial_t h$, yields
	\begin{align}
		\|\partial_t h\|_{A^0}+\Theta\tanh(1)\|\partial_t h\|_{A^3}
		&\le C\Big(\|\mathcal N(h)\|_{A^0}+\sigma\Theta\|I(h,\partial_t h)\|_{A^0}\Big)\notag\\
		&\le C\Big(\|h\|_{A^1}+\|h\|_{A^1}^2+\|h\|_{A^1}\|\partial_t h\|_{A^3}\Big).
		\label{eq:dt_h_est_closure}
	\end{align}
	Choose $\mathcal C>0$ so that $C\mathcal C\le \frac12\,\Theta\tanh(1)$ (shrinking $\mathcal C$ if necessary).
	Then, since $\|h\|_{A^1}\le \mathcal C$, the last term in \eqref{eq:dt_h_est_closure} can be absorbed, giving
	\begin{equation}\label{eq:dt_h_closed}
		\|\partial_t h\|_{A^0}+\frac{\Theta\tanh(1)}{2}\|\partial_t h\|_{A^3}
		\le C\big(\|h\|_{A^1}+\|h\|_{A^1}^2\big).
	\end{equation}
\textcolor{black}{Multiplying the $k$th Fourier equation by $
\frac{\overline{\widehat h(k)}}{|\widehat h(k)|},$
taking real parts, and summing over $k\neq0$, we obtain the derivative
\[
\frac{d}{dt}
\left(
\|h\|_{A^0}+\Theta\|\tanh(\Lambda)\Lambda^{3}h\|_{A^0}
\right).
\]
Therefore, applying the second and first estimates of
Lemma~\ref{LemmaBilinear} with $V=h$ and $V=\partial_t h$, respectively,
gives
}
	\begin{equation}\label{eq:energy_like_decay}
		\frac{d}{dt}\Big(\|h\|_{A^0}+\Theta\tanh(1)\|h\|_{A^3}\Big)+\chi\textcolor{black}{\tanh(1)}\,\|h\|_{A^1}
		\le C\,\|h\|_{A^1}^2 + C\,\|h\|_{A^1}\,\|\partial_t h\|_{A^3}.
	\end{equation}
	Using \eqref{eq:dt_h_closed} to bound $\|\partial_t h\|_{A^3}$ in terms of $\|h\|_{A^1}$ and the smallness
	$\|h\|_{A^1}\le \mathcal C$, we can absorb the right-hand side and obtain
	\begin{equation}\label{eq:energy_decay_absorbed}
		\frac{d}{dt}\Big(\|h\|_{A^0}+\Theta\tanh(1)\|h\|_{A^3}\Big)+\frac{\chi}{2}\textcolor{black}{\tanh(1)}\,\|h\|_{A^1}\le 0.
	\end{equation}
	In particular, $t\mapsto \|h(t)\|_{A^0}+\Theta\tanh(1)\|h(t)\|_{A^3}$ is decreasing for non-zero functions.
\textcolor{black}{
	Integrating \eqref{eq:energy_decay_absorbed}, we obtain
\begin{equation}\label{eq:energy_decay_absorbed2}
\begin{aligned}
&\|h(t)\|_{A^0}+\Theta\tanh(1)\|h(t)\|_{A^3}
+\frac{\chi\tanh(1)}{2}
\int_0^t\|h(s)\|_{A^1}\,ds \leq
\|h_0\|_{A^0}+\Theta\tanh(1)\|h_0\|_{A^3}.
\end{aligned}
\end{equation}
Since $h$ has zero mean, using interpolation we find that
\[
\min\{1,\Theta\tanh(1)\}\|h(t)\|_{A^1}
\leq
\|h(t)\|_{A^0}+\Theta\tanh(1)\|h(t)\|_{A^3}.
\]
Consequently,
\[
\sup_{t\geq0}\|h(t)\|_{A^1}
\leq
\frac{\mathcal C_2}{\min\{1,\Theta\tanh(1)\}}.
\]
Choosing $\mathcal C_2$ sufficiently small gives a strict improvement of
the $A^1$ bootstrap, and the local solution therefore extends globally.} \medskip

\textcolor{black}{
It remains to justify the asserted $A^3$ decay of the solution. From the energy balance inequality \eqref{eq:energy_decay_absorbed2}, we have that the $\|h(t)\|_{A^0}+\Theta\tanh(1)\|h(t)\|_{A^3}$ converges monotonically to zero. Therefore, we can infer that each term converges (maybe not monotonically) to zero, yielding the desired decay. }

\end{proof}

Let us now turn our attention to the case $\lambda>0.$ For this case we have the following theorem:

\begin{theorem}\label{thm:wp_A3_lambda}
	Fix $\lambda,\chi,\Theta,\sigma>0$ and let $h_0\in A^3(\TT)$ have zero mean.
	There exist two constants $\mathcal C=\mathcal C(\lambda,\chi,\Theta,\sigma),\textcolor{black}{\mathcal C_2=\mathcal C_2(\lambda,\chi,\Theta,\sigma)}>0$ such that, if
	\[
	\|h_0\|_{A^1}\le \mathcal C,
	\]
	and

	$$
	\textcolor{black}{\left(\|h_0\|_{A^0}+\Theta\tanh(1)\|h_0\|_{A^3}\right)}\leq \mathcal{C}_2,
	$$
	equation \eqref{eq:model_sigma_LHS_full_multiplier_1D} admits a (mild) solution
	\[
	h\in C([0,\infty);A^3(\TT)).
	\]
	More precisely, $h$ satisfies the integral formulation
	\begin{equation}\label{eq:Duhamel_quasilinear}
		h(t)=h_0+\int_0^{t} (\mathcal L_{h(s)})^{-1}\,\mathcal N_\lambda(h(s))\,ds,
		\qquad t\ge 0,
	\end{equation}
	where
	\begin{align*}
		\mathcal N_\lambda(h)
		&=
		-\chi\,\Lambda\tanh(\Lambda)\,h
		-\frac{\lambda}{4}\,\Lambda\tanh(\Lambda)\,\partial_{xxxx} h\\
		&\quad+\sigma\chi\Big(\Lambda\tanh(\Lambda)\big(h\,\Lambda\tanh(\Lambda)\,h\big)
		+\partial_x\big(h\,\partial_x h\big)\Big)\\
		&\quad+\sigma\frac{\lambda}{4}\Big(\Lambda\tanh(\Lambda)\big(h\,\Lambda\tanh(\Lambda)\,\partial_{xxxx} h\big)
		+\partial_x\big(h\,\partial_{xxxxx} h\big)\Big),
	\end{align*}
	and, for a given profile $h$, the operator $\mathcal L_h$ is defined by
	\begin{equation}\label{profile:Lh}
		\mathcal{L}_h U
		=
		\big(1-\Theta\,\Lambda\tanh(\Lambda)\,\partial_{xx}\big)U
		+\sigma\Theta\Big(\Lambda\tanh(\Lambda)\big(h\,\Lambda\tanh(\Lambda)\,\partial_{xx}U\big)
		+\partial_x\big(h\,\partial_{xxx}U\big)\Big).
	\end{equation}
\textcolor{black}{Moreover, the solution decays exponentially to zero, namely
\[
\begin{aligned}
\|h(t)\|_{A^0}+\Theta\tanh(1)\|h(t)\|_{A^3}
&\leq
\exp\left[
-\min\left\{
\frac{\chi\tanh(1)}{2},
\frac{\lambda}{8\Theta}
\right\}t
\right]
\\
&\quad\times
\left(
\|h_0\|_{A^0}+\Theta\tanh(1)\|h_0\|_{A^3}
\right).
\end{aligned}
\]
}
\end{theorem}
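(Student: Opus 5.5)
I will follow the two-level fixed point scheme of the proof of Theorem~\ref{thm:1}, with one change: the elastic term now enters $\mathcal N_\lambda$ with five spatial derivatives. For this reason I will not estimate $\mathcal L_h^{-1}\mathcal N_\lambda(h)$ in $A^0$ from $\|h\|_{A^1}$ alone. Instead I will use the smoothing of $\mathcal L_0^{-1}$, which gains three derivatives, and work in the space
\[
\mathbb X_T:=\Big\{h\in C([0,T];A^3):\ h(0)=h_0,\ \|h\|_{L^\infty(0,T;A^1)}\le 2\mathcal C,\ \|h\|_{L^\infty(0,T;A^3)}\le 2R\Big\}.
\]

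\textbf{Step 1: solving for $U=\partial_t h$.} For fixed $h$ I will solve $\mathcal L_hU=\mathcal N_\lambda(h)$ for $U\in A^3$. The perturbation part of $\mathcal L_h$ is handled exactly as in Step 2 of Theorem~\ref{thm:1}, using the first estimate of Lemma~\ref{LemmaBilinear} and the smallness of $\|h\|_{A^1}$. The forcing is where the new difficulty appears. The linear elastic piece $\frac{\lambda}{4}\mathcal G_0\partial_x^4h$ is not in $A^0$ for $h\in A^3$. However, its symbol divided by $\ell_0(k)$ is bounded by $\frac{\lambda}{4\Theta}|k|^2$, so $\mathcal L_0^{-1}$ maps it to $A^1$ with norm $\le C\|h\|_{A^3}$. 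I therefore plan to measure $U$ in $A^1$ at the $A^3$ level of $h$. The quadratic elastic term from the third estimate of Lemma~\ref{LemmaBilinear} is of size $\|h\|_{A^1}\|h\|_{A^5}$, which costs two derivatives too many.

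\textbf{Main obstacle.} I will redo the symbol computation of Lemma~\ref{LemmaBilinear}: the bracket $A_{k,m}+B_{k,m}$ multiplies $|k||k-m|^5$. I then divide by $\ell_0(k)\sim\Theta|k|^4$ in the region $|k|\gtrsim|k-m|$, while in the region $|k|\ll|k-m|$ the cancellation $A_{k,m}=0$ forces $|k|\le|m|$. This should give
\[
\|\mathcal L_0^{-1}(\text{quadratic elastic term})\|_{A^1}\le C\|h\|_{A^3}^2 ,
\]
at least up to an extra contribution of the form $\|h\|_{A^1}\|h\|_{A^3}$. This derivative-loss bookkeeping is the step I expect to be hardest. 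If it fails, I will instead extract the parabolic gain time-integrated, as in the paper's remark about ``additional space-time control'': I will add $\int_0^T\|h\|_{A^5}\,dt$ to the norm and control it from the dissipation produced in Step 3. The local contraction in $h$ then proceeds as in Step 3 of Theorem~\ref{thm:1}, yielding \eqref{eq:Duhamel_quasilinear} on a short interval.

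\textbf{Step 3: global a priori estimate.} Following the derivation of \eqref{eq:energy_like_decay}, I pair the $k$-th Fourier mode of $\mathcal L_0\partial_th$ with $\overline{\widehat h(k)}/|\widehat h(k)|$. This controls the time derivative of $E(t):=\|h\|_{A^0}+\Theta\|\tanh(\Lambda)\Lambda^3h\|_{A^0}$. The linear terms then produce the dissipation
\[
\chi\tanh(1)\|h\|_{A^1}+\frac{\lambda}{4}\tanh(1)\|h\|_{A^5}.
\]
The nonlinear terms are bounded by Lemma~\ref{LemmaBilinear} by $C\|h\|_{A^1}\big(\|h\|_{A^1}+\|h\|_{A^5}+\|\partial_th\|_{A^3}\big)$, with $\|\partial_th\|_{A^3}$ controlled as in \eqref{eq:dt_h_closed} plus an elastic contribution $\lesssim\|h\|_{A^5}$. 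Smallness of $\|h\|_{A^1}$ absorbs all of these terms, leaving
\[
\frac{d}{dt}E+\frac{\chi\tanh(1)}{2}\|h\|_{A^1}+\frac{\lambda}{8}\|h\|_{A^5}\le 0 .
\]
Since $\|h\|_{A^1}\ge\|h\|_{A^0}$ and $\|h\|_{A^5}\ge\|h\|_{A^3}=\frac{1}{\Theta}\Theta\|h\|_{A^3}$, the dissipation dominates $\min\{\frac{\chi\tanh(1)}{2},\frac{\lambda}{8\Theta}\}E$, and Gr\"onwall gives the stated exponential decay. Finally, the bound $\min\{1,\Theta\tanh(1)\}\|h\|_{A^1}\le E\le \mathcal C_2$ closes the $A^1$ bootstrap, while the $A^3$ bound comes from $E$ itself. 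This lets the local solution be continued for all times.
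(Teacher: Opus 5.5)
\textbf{Gap: local existence.} Your Step 3 is essentially the paper's a priori estimate: the same energy $\sum_k\ell_0(k)|\widehat h(k)|$, the same absorption, the same Gr\"onwall. The problem is the local existence part, and the obstruction is not the bilinear estimate you single out.

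Note first that $\ell_0(k)=1+\Theta|k|^3\tanh(|k|)\sim\Theta|k|^3$, not $|k|^4$. Hence the leading part of $U_h=\mathcal L_h^{-1}\mathcal N_\lambda(h)$ has symbol $\approx-\frac{\lambda}{4\Theta}k^2$. After inverting $\mathcal L_0$, the equation is second-order parabolic, and $U_h$ is two derivatives rougher than $h$.

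So for $h\in L^\infty(0,T;A^3)$ you only get $U_h\in A^1$. Then $\mathcal Th=h_0+\int_0^tU_h\,ds$ lies in $A^1$, not $A^3$, and the map cannot send your $\mathbb X_T$ into itself, however well the quadratic elastic term is estimated. Your bound $\|\mathcal L_0^{-1}(\cdot)\|_{A^1}\lesssim\|h\|_{A^3}^2$ for that term does follow from the $A_{k,m}/B_{k,m}$ splitting of Lemma~\ref{LemmaBilinear}, so it is not where things break. For the same reason, the contraction in $L^\infty(0,T;A^1)$ also fails: $\|U_{h_1}-U_{h_2}\|_{A^1}$ needs $\|h_1-h_2\|_{A^3}$.

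Your fallback does not repair this. The map $h\mapsto h_0+\int_0^tU_h\,ds$ has no smoothing. Bounding $\mathcal Th$ in $L^1(0,T;A^5)$ would require $U_h\in L^1(0,T;A^5)$, i.e.\ $h\in L^1(0,T;A^7)$. The dissipation in your Step 3 is a property of genuine (sufficiently regular) solutions, not of this Picard map. Your final continuation argument therefore presupposes a local theory, with a continuation criterion, that the proposal never produces.

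\textbf{What is needed.} You need a construction on which the energy estimate can legitimately be run. The paper regularizes the top-order terms: it replaces $h$ by $P_Nh$ in the highly differentiated factor of both elastic terms. For fixed $N$ the forcing is then bounded in $A^0$ by $C_N\|h\|_{A^1}(1+\|h\|_{A^1})$, and the $\lambda=0$ two-step fixed point of Theorem~\ref{thm:wp_A1} applies. Your Step 3 then holds for $h^N$ uniformly in $N$, with the dissipation $\|P_Nh^N\|_{A^5}$ absorbing the $\sigma\lambda$ term. This gives $T_N=\infty$, $N$-uniform bounds in $L^\infty_tA^3\cap L^1_tA^5$ with $\partial_th^N\in L^1_tA^3$, and compactness ($A^5$ compactly embedded in $A^3$) passes to the limit.

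Alternatively, you could keep a direct fixed point but centre it on the linear parabolic flow. Write $\partial_th=\mathcal Ah+R(h)$ with $\mathcal A=\mathcal L_0^{-1}\big(-\chi\mathcal G_0-\frac{\lambda}{4}\mathcal G_0\partial_x^4\big)$, whose symbol $a(k)$ satisfies $-a(k)\ge c\,k^2$ for $k\neq0$, and use Duhamel's formula with $e^{t\mathcal A}$. In Wiener spaces this gives the maximal-regularity bound $L^1_tA^3\to L^\infty_tA^3\cap L^1_tA^5$. The remainder $R(h)$ is then controlled in $L^1_tA^3$ by Lemma~\ref{LemmaBilinear} and the smallness of $\|h\|_{L^\infty_tA^1}$. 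One of these two ingredients is needed before your Step 3 and the continuation argument can close the proof.
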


\begin{proof}[Proof of Theorem \ref{thm:wp_A3_lambda}]
Throughout we work on $\TT$ and assume $\widehat h_0(0)=0$. Since each term in the right-hand side of
\eqref{eq:model_sigma_LHS_full_multiplier_1D} has zero spatial mean, the mean is preserved by the flow.
Set
\[
\textcolor{black}{\mathcal G_0}:=\Lambda\tanh(\Lambda),\qquad \widehat{\textcolor{black}{\mathcal G_0} f}(k)=|k|\tanh(|k|)\,\widehat f(k),
\]
and for a given profile $h$ define $\mathcal L_h$ by \eqref{profile:Lh}. For convenience we also recall that
$
\mathcal L_0:=1-\Theta\,\textcolor{black}{\mathcal G_0}\partial_{xx}.
$

\medskip
\noindent\underline{Step 1: Approximate problem.}
Let $P_N$ be the Fourier projection
\[
P_N f(x):=\sum_{|k|\le N}\widehat f(k)\,\frac{e^{ikx}}{\sqrt{2\pi}}.
\]
We consider the approximate system for $(h^N,U^N)$:
\begin{equation}\label{eq:approx_system_fullproof}
	\begin{cases}
		\partial_t h^N = U^N,\\[0.2em]
		\mathcal L_{h^N}U^N=\mathcal F_N(h^N),\\[0.2em]
		h^N(0)=P_Nh_0,
	\end{cases}
\end{equation}
where
\begin{align}\label{eq:FN_def_fullproof}
	\mathcal F_N(h^{N})
	&=
	-\chi\,\textcolor{black}{\mathcal G_0} h^{N}
	-\dfrac{\lambda}{4}\,\textcolor{black}{\mathcal G_0}\partial_{xxxx}P_N h^{N} +\sigma\chi\Big(\textcolor{black}{\mathcal G_0}(h^{N}\,\textcolor{black}{\mathcal G_0} h^{N})+\partial_x(h^{N}\,\partial_x h^{N})\Big)\notag\\
	&\quad+\sigma\dfrac{\lambda}{4}\Big(\textcolor{black}{\mathcal G_0}(h^{N}\,\textcolor{black}{\mathcal G_0}\partial_{xxxx}P_N h^{N})+\partial_x\big(h^{N}\,\partial_{xxxxx}P_N h^{N}\big)\Big).
\end{align}
Note that $P_Nh_0$ has zero mean. For each fixed $N$, by the same two-step fixed point argument as in
Theorem~\ref{thm:wp_A1} (first solving $\mathcal L_{h^N}U^N=\mathcal F_N(h^N)$ for $U^N$ with $h^N$ frozen,
and then solving $\partial_t h^N=U^N$), there exists a maximal time $T_N\in(0,\infty]$ such that
\[
h^N\in C([0,T_N);A^3\textcolor{black}{(\TT)})
\]
and \eqref{eq:approx_system_fullproof} holds on $[0,T_N)$.

\medskip
\medskip
\noindent\underline{Step 2: A priori estimates and global existence for the approximate system.}
Fix $N$ and omit the superscript $N$ in this step. Assume that on some interval $[0,T)$ we have the bootstrap bound
\begin{equation}\label{eq:bootstrap_A1_short}
	\sup_{t\in[0,T)}\|h(t)\|_{A^1}\le \mathcal C,
\end{equation}
where $\mathcal C>0$ will be chosen later (independently of $N$).
\medskip

\noindent
By the arguments in the proof of Theorem~\ref{thm:wp_A1} we have:
(i) the invertibility estimate for $\mathcal L_0=1-\Theta G_0\partial_{xx}$ on mean-zero data,
\begin{equation}\label{eq:L0_inv_est_short}
	\|\mathcal L_0^{-1}F\|_{A^0}+\Theta\tanh(1)\,\|\mathcal L_0^{-1}F\|_{A^3}
	\le C\,\|F\|_{A^0},
\end{equation}
and (ii) the symbol-cancellation bound
\begin{equation}\label{eq:I_est_short}
	\|I(h,V)\|_{A^0}\le C\,\|h\|_{A^1}\,\|V\|_{A^3},
	\qquad
	I(h,V):=G_0\big(h\,G_0\partial_{xx}V\big)+\partial_x\big(h\,\partial_{xxx}V\big),
\end{equation}
with constants independent of $N$. From $\mathcal L_h(\partial_t h)=\mathcal F_N(h)$ we write
\[
\mathcal L_0(\partial_t h)=\mathcal F_N(h)-\sigma\Theta\,I(h,\partial_t h).
\]
Applying \eqref{eq:L0_inv_est_short} and using \eqref{eq:I_est_short} together with the bootstrap
\eqref{eq:bootstrap_A1_short} gives
\begin{align}
	\|\partial_t h\|_{A^0}+\Theta\tanh(1)\|\partial_t h\|_{A^3}
	&\le C\Big(\|\mathcal F_N(h)\|_{A^0}+\sigma\Theta\,\|I(h,\partial_t h)\|_{A^0}\Big)\notag\\
	&\le C\Big(\|\mathcal F_N(h)\|_{A^0}+\|h\|_{A^1}\,\|\partial_t h\|_{A^3}\Big)\notag\\
	&\le C\|\mathcal F_N(h)\|_{A^0}+C\mathcal C\,\|\partial_t h\|_{A^3}.
	\label{eq:dth_mixed_preabs}
\end{align}
Choosing $\mathcal C>0$ so that $C\mathcal C\le \frac12\,\Theta\tanh(1)$, we absorb the last term and obtain
\begin{equation}\label{eq:dth_mixed_closed}
	\|\partial_t h\|_{A^0}+\frac{\Theta\tanh(1)}{2}\,\|\partial_t h\|_{A^3}
	\le C\,\|\mathcal F_N(h)\|_{A^0}.
\end{equation}

\smallskip
\noindent
Using the Wiener algebra property, boundedness of $G_0$ on Wiener spaces, and $\|P_N f\|_{A^m}\le \|f\|_{A^m}$,
we have the pointwise estimate
\begin{equation}\label{eq:FN_A0_bound_short}
	\|\mathcal F_N(h)\|_{A^0}
	\le C\Big(\|h\|_{A^1}+\lambda\|P_Nh\|_{A^5}
	+\sigma\|h\|_{A^1}^2+\sigma\lambda\|h\|_{A^1}\|P_Nh\|_{A^5}\Big).
\end{equation}
Proceeding exactly as in Theorem~\ref{thm:wp_A1} (i.e.\ using $\partial_t|\widehat h(k)|\le |\widehat U(k)|$,
the cancellation estimate \eqref{eq:I_est_short} to control the $\sigma\Theta$--contribution, and the lower bound
$\tanh(|k|)\ge \tanh(1)$ for $k\neq 0$), we obtain the differential inequality
\begin{multline}
	\frac{d}{dt}\textcolor{black}{\left(\|h\|_{A^0}+\Theta\tanh(1)\|h\|_{A^3}\right)}
	+\chi\tanh(1)\|h\|_{A^1}
	+\frac{\lambda\tanh(1)}{4}\|P_Nh\|_{A^5}
	\\\le C\Big(\sigma\|h\|_{A^1}^2+\sigma\lambda\|h\|_{A^1}\|P_Nh\|_{A^5}\Big).
	\label{eq:A0_dissip_short}
\end{multline}
Shrinking $\mathcal C$ if necessary so that $C\sigma\mathcal C\le \frac12\chi\tanh(1)$ and
$C\sigma\lambda\mathcal C\le \frac12\cdot \frac{\lambda\tanh(1)}{4}$, the right-hand side of
\eqref{eq:A0_dissip_short} can be absorbed, and we obtain
\begin{equation}\label{eq:A0_absorbed_short}
	\frac{d}{dt}\textcolor{black}{\left(\|h\|_{A^0}+\Theta\tanh(1)\|h\|_{A^3}\right)}
	+\frac{\chi\tanh(1)}{2}\|h\|_{A^1}
	+\frac{\lambda\tanh(1)}{8}\|P_Nh\|_{A^5}
	\le 0.
\end{equation}
\textcolor{black}{
Integrating \eqref{eq:A0_absorbed_short} and using
$\|P_Nh_0\|_{A^s}\leq\|h_0\|_{A^s}$ gives, for every $t<T_N$,
\begin{multline}\label{eq:lambda_integrated_minimal}
\|h(t)\|_{A^0}+\Theta\tanh(1)\|h(t)\|_{A^3}
+\frac{\chi\tanh(1)}2\int_0^t\|h(s)\|_{A^1}\,ds\\
+\frac{\lambda\tanh(1)}8\int_0^t\|h(s)\|_{A^5}\,ds
\leq \|h_0\|_{A^0}+\Theta\tanh(1)\|h_0\|_{A^3}.
\end{multline}
}
\textcolor{black}{From these bounds, we find that}
\begin{equation}\label{estimateA3}
\sup_{t\in[0,T)}\|h(t)\|_{A^3}\le C\big(\|h_0\|_{A^3},\|h_0\|_{A^0},\lambda,\chi,\Theta,\sigma\big),
\end{equation}
with a constant independent of $N$. \medskip

The local well-posedness argument (Step~1) provides a continuation criterion depending only on
$\sup_{[0,t]}\|h\|_{A^1}$. In particular, the solution can be continued as long as
\eqref{eq:bootstrap_A1_short} holds. Since 
\textcolor{black}{
$$
\sup_{t\in[0,\infty)}\|h(t)\|_{A^1}\leq C(\|h_0\|_{A^0}+\Theta\tanh(1)\|h_0\|_{A^3})\leq \mathcal{C}
$$
if $\mathcal{C}_2$ is small enough which guarantees that the maximal time satisfies $T_N=\infty$.
}
\medskip

\noindent\underline{Step 3: Uniform bounds and passage to the limit
$N\to\infty$.}
Fix $T>0$. Integrating \eqref{eq:A0_absorbed_short} and using
$\|P_Nh_0\|_{A^s}\leq\|h_0\|_{A^s}$, we obtain
\textcolor{black}{
\begin{align*}
&\|h^N(t)\|_{A^0}
+\Theta\tanh(1)\|h^N(t)\|_{A^3}
+\frac{\chi\tanh(1)}{2}
 \int_0^t\|h^N(s)\|_{A^1}\,ds\\
&\qquad
+\frac{\lambda\tanh(1)}{8}
 \int_0^t\|h^N(s)\|_{A^5}\,ds
\leq
\|h_0\|_{A^0}
+\Theta\tanh(1)\|h_0\|_{A^3}.
\end{align*}
Consequently,
\[
\sup_N\left(
\|h^N\|_{L^\infty(0,T;A^3)}
+\|h^N\|_{L^1(0,T;A^5)}
\right)<\infty.
\]}
Moreover, combining \eqref{eq:dth_mixed_closed} with
\eqref{eq:FN_A0_bound_short} gives
\[
\sup_N\|\partial_t h^N\|_{L^1(0,T;A^3)}<\infty.
\]

Since $A^5(\TT)$ is compactly embedded into $A^3(\TT)$, a standard
compactness argument yields, after extracting a subsequence,
\[
h^N\longrightarrow h
\qquad\text{strongly in }L^1(0,T;A^3(\TT)).
\]
The preceding uniform bounds and Lemma~\ref{LemmaBilinear} justify the
limit in the nonlinear terms. The limiting equation gives
$\partial_t h\in L^1(0,T;A^3)$, and therefore
\[
h\in W^{1,1}(0,T;A^3)
\subset C([0,T];A^3),
\qquad h(0)=h_0.
\]
Since $T>0$ is arbitrary, the solution is global. \textcolor{black}{Finally, for zero-mean functions,
\[
\frac{\chi\tanh(1)}{2}\|h\|_{A^1}
+\frac{\lambda\tanh(1)}{8}\|h\|_{A^5}
\geq
\min\left\{
\frac{\chi\tanh(1)}{2},
\frac{\lambda}{8\Theta}
\right\}
\left(
\|h\|_{A^0}
+\Theta\tanh(1)\|h\|_{A^3}
\right).
\]
Therefore, \eqref{eq:A0_absorbed_short} and Gr\"onwall's inequality
give
\begin{align*}
\|h(t)\|_{A^0}+\Theta\tanh(1)\|h(t)\|_{A^3}
&\leq
\exp\left[
-\min\left\{
\frac{\chi\tanh(1)}{2},
\frac{\lambda}{8\Theta}
\right\}t
\right]\\
&\quad\times
\left(
\|h_0\|_{A^0}
+\Theta\tanh(1)\|h_0\|_{A^3}
\right),
\end{align*}
which concludes the proof.}

\end{proof}

\begin{remark}\label{rem:infinite_depth}
	Theorem~\ref{thm:wp_A3_lambda} extends to infinite depth by replacing
	\(\textcolor{black}{\mathcal{G}}_0=\Lambda\tanh(\Lambda)\) with \(\textcolor{black}{\mathcal{G}}_\infty=\Lambda\).
	All Wiener-space multiplier bounds and the mean-zero coercivity used in the proof remain valid
	(in fact \(\tanh(1)\) can be replaced by \(1\)), so the invertibility, commutator estimate, and the
	energy argument carry over verbatim. Hence one obtains the same global well-posedness and decay in
	\(A^0\). \textcolor{black}{In this case the differentiated quantity is simply
	$\|h\|_{A^0}+\Theta\|h\|_{A^3}$, and one obtains the same global
	well-posedness and decay in $A^3$.}
\end{remark}

Finally, we turn to the model \eqref{eq:model_sigma_LHS_full_multiplier_1D2}--\eqref{eq:model_sigma_LHS_full_multiplier_1D2b}.
In this case the equation does not contain a nonlinear spatial operator acting on the time derivative,
which slightly simplifies the well--posedness argument. In particular, one has the following global result
for small data.

\begin{theorem}
	\label{thm:wp_A3_model2}
	Fix $\lambda,\chi,\Theta,\sigma>0$ and let $h_0\in A^3(\TT)$ have zero mean.
	There exist $\mathcal C=\mathcal C(\lambda,\chi,\Theta,\sigma),\\
	\textcolor{black}{\mathcal C_2=\mathcal C_2(\lambda,\chi,\Theta,\sigma)}>0$ such that if
	\[
	\|h_0\|_{A^1}\le \mathcal C,
	\]
	and
	\textcolor{black}{
	$$
\|h_0\|_{A^0}+\Theta\tanh(1)\|h_0\|_{A^3}\leq \mathcal{C}_2,
$$
}
	then \textcolor{black}{system} \eqref{eq:model_sigma_LHS_full_multiplier_1D2}-\eqref{eq:model_sigma_LHS_full_multiplier_1D2b}  admits a global solution
	\[
	h\in C\big([0,\infty);A^3(\TT)\big).
	\]
	\textcolor{black}{Moreover, the solution decays exponentially to zero, namely
	\[
	\begin{aligned}
\|h(t)\|_{A^0}+\Theta\tanh(1)\|h(t)\|_{A^3}
&\leq
\exp\left[
-\min\left\{
\frac{\chi\tanh(1)}{2},
\frac{\lambda}{8\Theta}
\right\}t
\right]
\\
&\quad\times
\left(
\|h_0\|_{A^0}+\Theta\tanh(1)\|h_0\|_{A^3}.
\right)
\end{aligned}.
\]}
\end{theorem}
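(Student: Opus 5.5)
The plan is to follow the structure of the proof of Theorem~\ref{thm:wp_A3_lambda}, using the fact that in \eqref{eq:model_sigma_LHS_full_multiplier_1D2}--\eqref{eq:model_sigma_LHS_full_multiplier_1D2b} the operator acting on $\partial_t h$ is the fixed multiplier $\mathcal L_0=1-\Theta\,\mathcal G_0\partial_{xx}$. This removes the inner fixed point of Step~2 in Theorem~\ref{thm:wp_A1}. I rewrite the system as
\[
\partial_t h=\mathcal L_0^{-1}\Big(\mathcal N_\lambda(h)-\sigma\Theta\, I(h,\mu)\Big),\qquad \mu=\mathcal L_0^{-1}\big(-\chi\mathcal G_0 h-\tfrac{\lambda}{4}\mathcal G_0\partial_x^4 h\big),
\]
with $I$ as in \eqref{eq:I_est_short} and $\mathcal N_\lambda$ as in Theorem~\ref{thm:wp_A3_lambda}. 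The first step is to estimate $\mu$. On the Fourier side,
\[
\widehat\mu(k)=-\frac{|k|\tanh|k|\,(\chi+\tfrac{\lambda}{4}k^4)}{1+\Theta|k|^3\tanh|k|}\,\widehat h(k),
\]
and this symbol is $O(|k|^2)$. Hence $\|\mu\|_{A^3}\le C(\|h\|_{A^1}+\|h\|_{A^5})$. Lemma~\ref{LemmaBilinear} then gives $\|I(h,\mu)\|_{A^0}\le C\|h\|_{A^1}(\|h\|_{A^1}+\|h\|_{A^5})$. So the new term is quadratic, with the same structure as the $\sigma\lambda$ term of $\mathcal N_\lambda$.

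Next I regularize exactly as in Step~1 of Theorem~\ref{thm:wp_A3_lambda}, placing $P_N$ on the top-order factors of $h$ in $\mathcal N_\lambda$ and in $\mu$. For fixed $N$ the right-hand side is then locally Lipschitz on $A^3$ (indeed on $A^1$ after truncation), since $\mathcal L_0^{-1}$ gains three derivatives by \eqref{eq:L0_inv_est_short}. A Picard argument therefore gives a local solution $h^N\in C([0,T_N);A^3)$ with zero mean preserved. No contraction in $U$ is needed.

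The core step is the energy inequality. I multiply the $k$-th Fourier mode of $\mathcal L_0\partial_t h=\dots$ by $\overline{\widehat h(k)}/|\widehat h(k)|$ and sum. Using $\ell_0(k)\ge1+\Theta\tanh(1)|k|^3$ and the sign of the linear symbols $-|k|\tanh|k|(\chi+\frac{\lambda}{4}k^4)$, I obtain
\[
\frac{d}{dt}\big(\|h\|_{A^0}+\Theta\tanh(1)\|h\|_{A^3}\big)+\chi\tanh(1)\|h\|_{A^1}+\tfrac{\lambda\tanh(1)}{4}\|P_Nh\|_{A^5}\le C\sigma\|h\|_{A^1}\big(\|h\|_{A^1}+\|P_Nh\|_{A^5}\big).
\]
The quadratic $\chi$ and $\lambda$ terms are bounded by the second and third estimates of Lemma~\ref{LemmaBilinear}, and the $\mu$ term by the bound above. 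This is the same inequality as \eqref{eq:A0_dissip_short}, so the step should simply transfer. I expect the one delicate point to be the time derivative of the weighted quantity: the left side really produces $\frac{d}{dt}\sum_k \ell_0(k)|\widehat h(k)|$, and I compare it with $\|h\|_{A^0}+\Theta\tanh(1)\|h\|_{A^3}$ in the same way as the paper does.

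Finally I close the argument. Under the bootstrap $\sup\|h\|_{A^1}\le\mathcal C$, with $\mathcal C$ small, the right side is absorbed, giving \eqref{eq:A0_absorbed_short}. Integrating and using $\min\{1,\Theta\tanh(1)\}\|h\|_{A^1}\le \|h\|_{A^0}+\Theta\tanh(1)\|h\|_{A^3}\le\mathcal C_2$ improves the bootstrap once $\mathcal C_2$ is small, so $T_N=\infty$. The bounds are uniform in $N$ in $L^\infty A^3\cap L^1A^5$, and $\partial_t h^N$ is bounded in $L^1A^3$. Compactness of $A^5\hookrightarrow A^3$ then lets me pass to the limit exactly as in Step~3 of Theorem~\ref{thm:wp_A3_lambda}, which gives $h\in C([0,\infty);A^3)$. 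The exponential decay follows from the coercivity
\[
\tfrac{\chi\tanh(1)}2\|h\|_{A^1}+\tfrac{\lambda\tanh(1)}8\|h\|_{A^5}\ge\min\{\tfrac{\chi\tanh(1)}2,\tfrac{\lambda}{8\Theta}\}\big(\|h\|_{A^0}+\Theta\tanh(1)\|h\|_{A^3}\big)
\]
and Gr\"onwall's inequality, which carries over to the limit by lower semicontinuity.
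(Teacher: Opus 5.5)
Your route is the paper's. The operator on $\partial_t h$ is the fixed multiplier $\mathcal L_0$, $\mu$ is bounded in $A^3$ by $\|h\|_{A^5}$ as in \eqref{eq:muA3_bound}, and the $\mu$-term is controlled by the first bound of Lemma~\ref{LemmaBilinear}. Absorption, the $\mathcal C_2$-bootstrap, compactness and Gr\"onwall then follow Theorem~\ref{thm:wp_A3_lambda}.

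The step you flag, however, is a genuine gap, and ``as the paper does'' does not close it. The paper simply writes $\frac{d}{dt}\big(\|h\|_{A^0}+\Theta\tanh(1)\|h\|_{A^3}\big)$ where the mode-by-mode computation produces $\frac{d}{dt}\sum_k\ell_0(k)|\widehat h(k)|$. Time derivatives of two equivalent norms cannot be compared pointwise. Working with $E=\sum_k\ell_0(k)|\widehat h(k)|$ throughout does give global existence. But the decay then carries $E(0)$ on the right, which can be as large as $\|h_0\|_{A^0}+\Theta\|h_0\|_{A^3}$, rather than the quantity in the statement.

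The repair is to divide the $k$-th mode by $\ell_0(k)$ before weighting. With $a_k=|k|\tanh|k|\,(\chi+\frac{\lambda}{4}k^4)$ and $F$ the quadratic terms,
\[
\frac{d}{dt}|\widehat h(k)|\le-\frac{a_k}{\ell_0(k)}\,|\widehat h(k)|+\frac{|\widehat F(k)|}{\ell_0(k)}.
\]
Then multiply by $m(k)=1+\Theta\tanh(1)|k|^3$ and sum.
\begin{itemize}
\item Since $\tanh(1)\,\ell_0(k)\le m(k)\le\ell_0(k)$, the forcing contributes at most $\|F\|_{A^0}\le C\sigma\|h\|_{A^1}(\|h\|_{A^1}+\|h\|_{A^5})$.
\item The dissipation is at least $\tanh^2(1)\big(\chi\|h\|_{A^1}+\frac{\lambda}{4}\|h\|_{A^5}\big)$, so half of it absorbs $F$ under $A^1$-smallness.
\item The elementary bound $\frac{A+B}{1+D}\ge\min\{A,B/D\}$, with $A=\chi|k|\tanh|k|$, $B=\frac{\lambda}{4}|k|^5\tanh|k|$, $D=\Theta|k|^3\tanh|k|$, gives $a_k/\ell_0(k)\ge\min\{\chi\tanh(1),\frac{\lambda}{4\Theta}\}=2\rho$, where $\rho$ is the rate in the statement.
\end{itemize}
So the other half of the dissipation yields $\frac{d}{dt}\|h\|_m\le-\rho\|h\|_m$ for $\|h\|_m:=\|h\|_{A^0}+\Theta\tanh(1)\|h\|_{A^3}$. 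This is exactly the asserted estimate.

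This argument needs every mode to carry the $\lambda$-damping, and the truncation you borrow from Theorem~\ref{thm:wp_A3_lambda} removes it. With $P_N$ only on top-order factors, the modes $|k|>N$ of $h^N$ are damped only at rate $\chi|k|\tanh|k|/\ell_0(k)\approx\chi/(\Theta k^2)$. Two consequences follow:
\begin{itemize}
\item Your inequality bounds $P_Nh^N$, not $h^N$, in $L^1(0,T;A^5)$, so the compactness step is not yet justified.
\item There is no uniform exponential decay of $h^N$ to pass to the limit.
\end{itemize}
A full Galerkin scheme, $\mathcal L_0\partial_t h^N=P_N\big[\cdots\big](h^N)$ with $h^N=P_Nh^N$, fixes both points. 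It is a finite-dimensional ODE, $P_N$ commutes with all multipliers, and $\|P_NF\|_{A^0}\le\|F\|_{A^0}$. Hence the estimate above holds verbatim for $h^N$. This gives genuine uniform bounds in $L^\infty(0,T;A^3)\cap L^1(0,T;A^5)$ and the decay for each $N$, which then survives the limit by lower semicontinuity, as you intended.
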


\begin{proof}[Proof of Theorem \ref{thm:wp_A3_model2}]
	The argument follows the same strategy as in Theorems \ref{thm:wp_A3_lambda}. Thus we only indicate the key a priori estimate.
	Using the commutator/symbol-cancellation structure of the nonlinearities and the Wiener algebra bounds,
	one obtains the differential inequality
	\begin{align}\label{eq:energy_model2_preabs}
		\frac{d}{dt}\Big(\|h\|_{A^0}+\Theta\tanh(1)\|h\|_{A^3}\Big)
		+\chi\|h\|_{A^1}+\frac{\lambda\tanh(1)}{2}\|h\|_{A^5}
		\le C\Big(\|h\|_{A^1}^2+\|h\|_{A^1}\|h\|_{A^5}+\|h\|_{A^1}\|\mu\|_{A^3}\Big).
	\end{align}
	From \eqref{eq:model_sigma_LHS_full_multiplier_1D2b} and the multiplier bounds in Wiener spaces we have
	\begin{align}\label{eq:muA3_bound}
		\|\mu\|_{A^3}
		&=\Big\|(1-\Theta\,\Lambda\Delta_x)^{-1}\bigl(-\chi\,\Lambda h-\tfrac{\lambda}{4}\Lambda\partial_{xxxx}h\bigr)\Big\|_{A^3}
		\le C\Big\|-\chi\,\Lambda h-\tfrac{\lambda}{4}\Lambda\partial_{xxxx}h\Big\|_{A^0}
		\le C\|h\|_{A^5},
	\end{align}
	where we also used the mean-zero Poincar\'e inequality to control lower norms by higher ones.
	Inserting \eqref{eq:muA3_bound} into \eqref{eq:energy_model2_preabs} and using the smallness assumption
	$\|h_0\|_{A^1}\le \mathcal C$ (together with a standard continuity/bootstrap argument to propagate
	$\|h(t)\|_{A^1}\le \mathcal C$) we can absorb the right-hand side and obtain
	\begin{equation}\label{eq:energy_model2_absorbed}
		\frac{d}{dt}\Big(\|h\|_{A^0}+\Theta\tanh(1)\|h\|_{A^3}\Big)
		+\frac{\chi}{2}\|h\|_{A^1}+\frac{\lambda\tanh(1)}{4}\|h\|_{A^5}
		\le 0.
	\end{equation}
	This yields uniform-in-time bounds in $A^3$ and integrability of $\|h\|_{A^1}$, which imply global existence \textcolor{black}{due to
$$
\sup_{t\in[0,\infty)}\|h(t)\|_{A^1}\leq C(\|h_0\|_{A^0}+\Theta\tanh(1)\|h_0\|_{A^3})\leq \mathcal{C}
$$
if $\mathcal{C}_2$ is small enough. 
Finally,
\[
\frac{\chi\tanh(1)}{2}\|h\|_{A^1}
+\frac{\lambda\tanh(1)}{8}\|h\|_{A^5}
\geq
\min\left\{
\frac{\chi\tanh(1)}{2},
\frac{\lambda}{8\Theta}
\right\}
\Big(
\|h\|_{A^0}+\Theta\tanh(1)\|h\|_{A^3}
\Big).
\]
Therefore, \eqref{eq:energy_model2_absorbed} and G\"onwall's inequality yields
\[
\begin{aligned}
\|h(t)\|_{A^0}+\Theta\tanh(1)\|h(t)\|_{A^3}
&\leq
\exp\left[
-\min\left\{
\frac{\chi\tanh(1)}{2},
\frac{\lambda}{8\Theta}
\right\}t
\right]
\\
&\quad\times
\left(
\|h_0\|_{A^0}+\Theta\tanh(1)\|h_0\|_{A^3}
\right),
\end{aligned}
\]
give exponential decay.}
\end{proof}

\section{Well-posedness of the weakly nonlinear models for the thin film regime}\label{sec:wp2}
In this section we study the well-posedness of the weakly nonlinear thin-film regime model \eqref{eq:lub_PDE_explicit_1D} introduced in the previous section. For simplicity, we confine the analysis to one space dimension. The result reads as follows

\begin{theorem}\label{thm:wp_lub_A4_statement}
	Fix $\lambda,\chi,\Theta>0$ and parameters $\delta>0$, $\varepsilon\ge 0$.
	Let $h_0\in A^4(\TT)$ be the zero mean initial data.
	There exist constants $\mathcal C=\mathcal C(\lambda,\chi,\Theta,\delta,\varepsilon), \textcolor{black}{\mathcal C_2=\mathcal C_2(\lambda,\chi,\Theta,\delta,\varepsilon)}>0$ such that if
	\[
	\|h_0\|_{A^1}\le \mathcal C,
	\]
	and
\textcolor{black}{$$
	\|h_0\|_{A^0}+\sqrt{\delta}\,\Theta\,\|h_{0}\|_{A^4}\leq \mathcal{C}_2,
	$$
}	
then equation \eqref{eq:lub_PDE_explicit_1D} admits a global (mild) solution
	\[
	h\in C\big([0,\infty);A^4(\TT)\big).
	\]
	Moreover, $h$ satisfies the integral formulation
	\begin{equation}\label{eq:Duhamel_lub}
		h(t)=h_0+\int_0^{t} (\mathcal L_{h(s)})^{-1}\,\mathcal N_{\delta,\varepsilon}\big(h(s)\big)\,ds,
		\qquad t\ge 0,
	\end{equation}
	where
	\[
	\mathcal N_{\delta,\varepsilon}(h)
	:=\sqrt{\delta}\,\partial_x\Bigl((1+\varepsilon h)\,
	\partial_x\Bigl(\chi h+\frac{\lambda}{4}\partial_x^4 h\Bigr)\Bigr),
	\]
	and, for a given profile $h$, the operator $\mathcal L_h$ is defined by
	\begin{equation}\label{eq:profile_Lh_lub}
		\mathcal L_h U
		:=
		U+\sqrt{\delta}\,\Theta\,\partial_x\Bigl((1+\varepsilon h)\,\partial_x^3 U\Bigr).
	\end{equation}
	\textcolor{black}{Finally, the solution decays exponentially in time to zero, namely
	\[
\begin{aligned}
\|h(t)\|_{A^0}+\sqrt\delta\,\Theta\|h(t)\|_{A^4}
&\leq
e^{-\rho t}
\left(
\|h_0\|_{A^0}+\sqrt\delta\,\Theta\|h_0\|_{A^4}
\right),\
\rho&:=
\min\left\{
\frac{\sqrt\delta\,\chi}{2},
\frac{\lambda}{8\Theta}
\right\}.
\end{aligned}
\]}
\end{theorem}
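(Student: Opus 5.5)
We follow the same two-level fixed point/energy strategy as in Theorems~\ref{thm:wp_A1} and \ref{thm:wp_A3_lambda}, now for the local operator $\mathcal L_h$ in \eqref{eq:profile_Lh_lub}.

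\textbf{Step 1: the linear part.} Rewriting \eqref{eq:lub_PDE_explicit_1D} with $\mu=\Theta\partial_{xx}\partial_t h-\chi h-\frac{\lambda}{4}\partial_x^4h$ gives $\mathcal L_h(\partial_t h)=\mathcal N_{\delta,\varepsilon}(h)$, up to the sign convention. We set
\[
\mathcal L_0U=U+\sqrt\delta\,\Theta\,\partial_x^4U,
\]
whose symbol is $\ell_0(k)=1+\sqrt\delta\,\Theta k^4\ge1$. Exactly as in Step~1 of Theorem~\ref{thm:wp_A1}, for zero-mean $F$ this yields
\[
\|\mathcal L_0^{-1}F\|_{A^0}+\sqrt\delta\,\Theta\|\mathcal L_0^{-1}F\|_{A^4}\le C\|F\|_{A^0}.
\]
The perturbation is $\sqrt\delta\,\Theta\varepsilon\,\partial_x(h\,\partial_x^3U)$. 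By the Wiener algebra and Poincar\'e inequalities, its $A^0$ norm is bounded by
\[
\|h\partial_x^3U\|_{A^1}\le C\|h\|_{A^1}\|U\|_{A^4}.
\]
Here we may not need the cancellation of Lemma~\ref{LemmaBilinear}, since the operator is local. Hence, for $\|h\|_{A^1}\le\mathcal C$ small, $\mathcal L_h:A^4\to A^0$ is invertible by a Neumann series/contraction, with the same bound.

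\textbf{Step 2: local existence.} With $h$ frozen, we solve $\mathcal L_hU=\mathcal N_{\delta,\varepsilon}(h)$ by the contraction $S_h$. Then we solve $h=h_0+\int_0^tU_h$ in a ball of $C([0,T];A^1)$, as in Step~3 of Theorem~\ref{thm:wp_A1}. The difficulty is that $\mathcal N_{\delta,\varepsilon}$ involves $\partial_x^6h$, so $\|\mathcal N\|_{A^0}\lesssim\|h\|_{A^6}$, which is not controlled by $A^4$. We would therefore use Galerkin truncation, as in Step~1 of Theorem~\ref{thm:wp_A3_lambda}, replacing $\partial_x^4h$ by $\partial_x^4P_Nh$. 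Finite-dimensional solutions exist on a maximal interval, with continuation controlled by $\sup\|h\|_{A^1}$.

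\textbf{Step 3: a priori energy estimate (main obstacle).} Apply $\mathcal L_0^{-1}$ to the equation. On the Fourier side,
\[
\ell_0(k)\,\partial_t\widehat h(k)=-\sqrt\delta\big(\chi k^2+\tfrac{\lambda}{4}k^6\big)\widehat h(k)+\widehat{R}(k),
\]
where $R$ collects the $\varepsilon$-terms $\sqrt\delta\varepsilon\,\partial_x(h\,\partial_x(\Theta\partial_{xx}\partial_th-\chi h-\tfrac{\lambda}{4}\partial_x^4h))$. Multiply by $\overline{\widehat h}/|\widehat h|$ and sum over $k$. This gives
\[
\frac{d}{dt}\Big(\|h\|_{A^0}+\sqrt\delta\Theta\|h\|_{A^4}\Big)+\sqrt\delta\chi\|h\|_{A^2}+\frac{\sqrt\delta\lambda}{4}\|h\|_{A^6}\le\|R\|_{A^0}.
\]
The remainder satisfies
\[
\|R\|_{A^0}\le C\|h\|_{A^1}\big(\|\partial_th\|_{A^4}+\|h\|_{A^2}+\|h\|_{A^6}\big),
\]
using the product rule in the form $\|\partial_x(fg)\|_{A^0}\le\|f\|_{A^1}\|g\|_{A^1}$ together with Poincar\'e. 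We bound $\|\partial_t h\|_{A^4}$ as in \eqref{eq:dth_mixed_closed}:
\[
\sqrt\delta\Theta\|\partial_th\|_{A^4}\lesssim\|\mathcal N\|_{A^0}\lesssim\|h\|_{A^2}+\|h\|_{A^6}.
\]
Smallness of $\|h\|_{A^1}$ then absorbs $\|R\|_{A^0}$ into half the dissipation. The delicate point is that this absorption requires the constants to be compatible. All bad terms are of the form small times dissipative norm, so they close, but the constant in the $\partial_th$ bound must be uniform in $N$. We use $\|h\|_{A^2}\ge\|h\|_{A^0}$ and $\|h\|_{A^6}\ge\|h\|_{A^4}$ to obtain
\[
\tfrac12\sqrt\delta\chi\|h\|_{A^2}+\tfrac{\sqrt\delta\lambda}{8}\|h\|_{A^6}\ge\rho\big(\|h\|_{A^0}+\sqrt\delta\Theta\|h\|_{A^4}\big),
\]
with the claimed $\rho$ (possibly up to harmless constants). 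Gr\"onwall's inequality then gives the exponential decay. Finally, $\|h\|_{A^1}\lesssim\|h\|_{A^0}+\sqrt\delta\Theta\|h\|_{A^4}\le\mathcal C_2$ closes the bootstrap, so the truncated solutions are global.

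\textbf{Step 4: limit.} The bounds are uniform in $N$: $h^N\in L^\infty(0,T;A^4)\cap L^1(0,T;A^6)$ and $\partial_th^N\in L^1(0,T;A^4)$. Compactness of $A^6\hookrightarrow A^4$ yields a strongly convergent subsequence. We pass to the limit in the nonlinear terms and in the Duhamel formula \eqref{eq:Duhamel_lub}, giving $h\in W^{1,1}(0,T;A^4)\subset C([0,T];A^4)$. The decay estimate is inherited by lower semicontinuity.
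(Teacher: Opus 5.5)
Your proposal follows the paper's proof essentially step by step: first-order reformulation, inversion of $\mathcal L_h$ as a small perturbation of $1+\sqrt{\delta}\,\Theta\,\partial_x^4$ via $\|\partial_x(h\,\partial_x^3U)\|_{A^0}\le C\|h\|_{A^1}\|U\|_{A^4}$, truncation of the top-order term by $P_N$, the energy $\|h\|_{A^0}+\sqrt{\delta}\,\Theta\|h\|_{A^4}$ with absorption and the $A^1$ bootstrap, compactness, and Gr\"onwall with the same $\rho$. One imprecision, shared with the paper, should be fixed: with the truncation the energy inequality controls only $\|P_Nh^N\|_{L^1(0,T;A^6)}$, and the coercivity $\|P_Nh\|_{A^6}\ge\|h\|_{A^4}$ used for the decay fails on the modes $|k|>N$, so neither a uniform $L^1(0,T;A^6)$ bound on $h^N$ (hence compactness via $A^6\subset A^4$) nor exponential decay of the approximations (hence decay of $h$ "by lower semicontinuity") follows; instead compactify using the uniform $L^\infty(0,T;A^4)$ and time-derivative bounds, recover $h\in L^1(0,T;A^6)$ by lower semicontinuity applied to $P_Mh$ with $M\to\infty$, and prove the decay by redoing the energy estimate on the limit $h$.
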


\begin{proof}[Proof of Theorem \ref{thm:wp_lub_A4_statement}]
	We work on $\TT$ and assume $\widehat h_0(0)=0$. Since the right-hand side of
	\eqref{eq:lub_PDE_explicit_1D} is a spatial derivative, the mean is preserved.
	
	\medskip
	\noindent\underline{Step 1: Reformulation and Galerkin approximation.}
	Writing \eqref{eq:lub_PDE_explicit_1D} as a first-order system,
	\begin{equation}\label{eq:lub_system}
		\partial_t h = U,
		\qquad
		\mathcal L_h U = \mathcal N_{\delta,\varepsilon}(h),
	\end{equation}
	where
	\begin{equation}\label{eq:Lh_lub}
		\mathcal L_h U
		:=
		U+\sqrt{\delta}\,\Theta\,\partial_x\Big((1+\varepsilon h)\,\partial_x^3 U\Big),
	\end{equation}
	and
	\begin{equation}\label{eq:N_lub}
		\mathcal N_{\delta,\varepsilon}(h)
		:=
		\sqrt{\delta}\,\partial_x\Big((1+\varepsilon h)\,\partial_x\Big(\chi h+\frac{\lambda}{4}\partial_x^4 h\Big)\Big).
	\end{equation}
	Equivalently,
	\[
	\mathcal L_h U
	=
	U+\sqrt{\delta}\,\Theta\,\partial_x^4U+\sqrt{\delta}\,\Theta\,\varepsilon\,\partial_x\big(h\,\partial_x^3U\big).
	\]
	
	Let $P_N$ be the Fourier projection on $\{|k|\le N\}$. We consider the regularized system
	\begin{equation}\label{eq:lub_system_N}
		\partial_t h^N = U^N,
		\qquad
		\mathcal L_{h^N} U^N = \mathcal N_{\delta,\varepsilon}^N(h^N),
		\qquad
		h^N(0)=P_N h_0,
	\end{equation}
	where the only modification is in the highest derivative,
	\begin{equation}\label{eq:N_lub_N}
		\mathcal N_{\delta,\varepsilon}^N(h)
		:=
		\sqrt{\delta}\,\partial_x\Big((1+\varepsilon h)\,\partial_x\Big(\chi h+\frac{\lambda}{4}\partial_x^4 P_N h\Big)\Big).
	\end{equation}
	
	Fix $T>0$ and $\mathcal C>0$ and define
	\[
	\mathbb X_T^{\mathcal C}
	:=
	\Big\{
	h\in L^\infty(0,T;A^4):\ h(0)=P_Nh_0,\ \sup_{t\in[0,T]}\|h(t)\|_{A^1}\le \textcolor{black}{2}\mathcal C
	\Big\}.
	\]
	As in the previous well-posedness arguments, we construct solutions to \eqref{eq:lub_system_N}
	by two nested fixed point procedures: first solve the elliptic problem for $U^N$ with $h^N$ frozen,
	and then solve $\partial_t h^N = U^N$. \textcolor{black}{For the sake of simplicity, we just 
	write $h, U$ instead of $h^{N}, U^{N}$.}
	
	\medskip
	\noindent\underline{Step 2: Elliptic problem for $U$ with frozen $h$.}
	Fix $h\in\mathbb X_T^{\mathcal C}$. To solve $\mathcal L_h U=\mathcal N_{\delta,\varepsilon}^N(h)$, we use a
	contraction in $A^4$ based on the decomposition
	\[
	\mathcal L_h U
	=
	\underbrace{\big(I+\sqrt{\delta}\,\Theta\,\partial_x^4\big)U}_{=:\mathcal L_\star U}
	+\sqrt{\delta}\,\Theta\,\varepsilon\,\partial_x\big(h\,\partial_x^3U\big).
	\]
	The operator $\mathcal L_\star$ is invertible on mean-zero functions and satisfies the Wiener estimate
	\begin{equation}\label{eq:Lstar_inv}
		\|\mathcal L_\star^{-1}F\|_{A^0}+\sqrt{\delta}\,\Theta\,\|\mathcal L_\star^{-1}F\|_{A^4}
		\le C\,\|F\|_{A^0}.
	\end{equation}
	For $V\in A^4$, define $S_h(V)=U$ by
	\begin{equation}\label{eq:Sh_lub}
		\mathcal L_\star U
		=
		\mathcal N_{\delta,\varepsilon}^N(h)
		-\sqrt{\delta}\,\Theta\,\varepsilon\,\partial_x\big(h\,\partial_x^3V\big).
	\end{equation}
	Using the Wiener algebra property and $\|P_N f\|_{A^m}\le \|f\|_{A^m}$, we have the bounds
	\begin{align}
		\|\partial_x(h\,\partial_x^3V)\|_{A^0}
		&\le C\big(\|h\|_{A^1}\|V\|_{A^3}+\|h\|_{A^0}\|V\|_{A^4}\big), \label{eq:pert_bound}\\
		\|\mathcal N_{\delta,\varepsilon}^N(h)\|_{A^0}
		&\le C\sqrt{\delta}\Big((1+\|h\|_{A^0})\big(\|h\|_{A^2}+\|P_Nh\|_{A^6}\big)+\|h\|_{A^1}\|P_Nh\|_{A^5}\Big).
		\label{eq:Nbound}
	\end{align}
	Applying \eqref{eq:Lstar_inv} to \eqref{eq:Sh_lub} and combining with \eqref{eq:pert_bound}--\eqref{eq:Nbound} yields
	\begin{align}\label{eq:U_est_pre}
		\|U\|_{A^0}+\sqrt{\delta}\,\Theta\,\|U\|_{A^4}
		&\le C\sqrt{\delta}\Big((1+\|h\|_{A^0})\big(\|h\|_{A^2}+\|P_Nh\|_{A^6}\big)+\|h\|_{A^1}\|P_Nh\|_{A^5}\Big)\notag\\
		&\quad + C\sqrt{\delta}\,\Theta\,\varepsilon\big(\|h\|_{A^1}\|V\|_{A^3}+\|h\|_{A^0}\|V\|_{A^4}\big).
	\end{align}
	Similarly, for $V_1,V_2\in A^4$,
	\[
	\|S_h(V_1)-S_h(V_2)\|_{A^4}
	\le C\,\varepsilon\,\|h\|_{A^1}\,\|V_1-V_2\|_{A^4}.
	\]
	Choosing $\mathcal C$ (hence $\|h\|_{A^1}$) small so that $C\varepsilon\mathcal C<1$, $S_h$ is a contraction on $A^4$.
	Therefore $\mathcal L_h U=\mathcal N_{\delta,\varepsilon}^N(h)$ has a unique solution
	$U=U_h\in A^4$, and from \eqref{eq:U_est_pre} (absorbing the $V$-term at the fixed point) we obtain the closed bound
	\begin{align}\label{eq:U_est_closed}
		\|U\|_{A^0}+\frac{\sqrt{\delta}}{2}\,\Theta\,\|U\|_{A^4}
		\le C\sqrt{\delta}\Big((1+\|h\|_{A^0})\big(\|h\|_{A^2}+\|P_Nh\|_{A^6}\big)+\|h\|_{A^1}\|P_Nh\|_{A^5}\Big).
	\end{align}
	
	\medskip
	\noindent\underline{Step 3: Fixed point for the evolution.}
	Define
	\begin{equation}\label{eq:Th_lub}
		(\mathcal T h)(t):=P_Nh_0+\int_0^t U_h(s)\,ds,\qquad t\in[0,T].
	\end{equation}
	Using \eqref{eq:U_est_closed} and standard Wiener product estimates, one shows (as in Section~\ref{sec:wp2})
	that for $T>0$ small enough (depending on $\mathcal C$ but independent of $N$) the map
	$\mathcal T$ sends $\mathbb X_T^{\mathcal C}$ into itself and is a contraction in
	$L^\infty(0,T;A^1)$. Hence there exists a unique local solution $(h^N,U^N)$ to \eqref{eq:lub_system_N}
	on a maximal interval $[0,T_N)$, with $h^N\in C([0,T_N);A^4)$.
	
	\medskip
	\noindent\underline{Step 4: A priori estimates, global existence, and decay.}
	Define the energy
	\[
	E(t):=\|h(t)\|_{A^0}+\sqrt{\delta}\,\Theta\,\|h(t)\|_{A^4}.
	\]
	Using \textcolor{black}{that the approximate problem can be written as
$$
		\partial_t h+\sqrt{\delta}\,\partial_x^{2}\left(\Theta\,\partial_{xx}\partial_t h-\chi h-\frac{\lambda}{4}\,\partial_x^{4}P_Nh\right)
		=-\sqrt{\delta}\,\varepsilon\,\partial_x\big(h\,\partial_x\left(\Theta\,\partial_{xx}\partial_t h-\chi h-\frac{\lambda}{4}\,\partial_x^{4}P_Nh\right)\big),
$$
}	
one obtains the differential inequality
\begin{align}\label{eq:energy_lub_diff_full}
	\frac{d}{dt}E(t)
	+\sqrt{\delta}\,\chi\,\|h(t)\|_{A^2}
	+\sqrt{\delta}\,\frac{\lambda}{4}\,\|P_N h(t)\|_{A^6}
	&\le
	C\,\|h(t)\|_{A^1}
	\Big(
	\|h(t)\|_{A^2}
	+\|P_N h(t)\|_{A^6}
	+\|h(t)\|_{A^1} \notag\\
	&\qquad\qquad
	+\|P_N h(t)\|_{A^5}
	+\|h(t)\|_{A^1}\,\|P_N h(t)\|_{A^6}
	\Big).
\end{align}
\textcolor{black}{Indeed, in the previous step we have used the product estimate of the Wiener spaces together with Poincar\'e inequality, the smallness of the solution in $A^1$ and the elliptic estimate \eqref{eq:U_est_closed} to find that
\begin{align*}
I&=\|\sqrt{\delta}\,\varepsilon\,\partial_x\big(h\,\partial_x\left(\Theta\,\partial_{xx}\partial_t h-\chi h-\frac{\lambda}{4}\,\partial_x^{4}P_Nh\right)\big)\|_{A^0}\\
&\leq C\|h\|_{A^1}\left(\|\partial_t h\|_{A^4}+\|h\|_{A^2}+\|P_N h\|_{A^6}\right)\\
&\leq C\|h\|_{A^1}\left(\Big((1+\|h\|_{A^0})\big(\|h\|_{A^2}+\|P_Nh\|_{A^6}\big)+\|h\|_{A^1}\|P_Nh\|_{A^5}\Big)+\|h\|_{A^2}+\|P_N h\|_{A^6}\right).
\end{align*}
As before, choosing $\mathcal C$ small enough allows us to absorb the right-hand side into the left-hand side of
	\eqref{eq:energy_lub_diff_full}, which} yields
	\begin{equation}\label{eq:energy_lub_abs}
		\frac{d}{dt}E(t)
		+\sqrt{\delta}\,\frac{\chi}{2}\,\|h(t)\|_{A^2}
		+\sqrt{\delta}\,\frac{\lambda}{8}\,\|P_N h(t)\|_{A^6}
		\le 0.
	\end{equation}
	In particular, $E(t)\le E(0)$ for all $t<T_N$, giving uniform-in-$N$ control of $h^N$ in $A^4$ and decay of
	$\|h(t)\|_{A^4}$. 
	\textcolor{black}{By interpolation, we have that
	\[
	\min\{1,\sqrt\delta\,\Theta\}\|h\|_{A^1}
	\leq\|h\|_{A^0}+\sqrt\delta\,\Theta\|h\|_{A^4}.
	\]
Moreover, due to the fact that
$$
\sup_{t\in[0,\infty)}\|h(t)\|_{A^1}\leq C(\|h_0\|_{A^0}+\sqrt\delta\,\Theta\|h_0\|_{A^4})\leq 2\mathcal{C},
$$
if $\mathcal{C}_2$ is small enough,} a standard continuation argument then shows that the bootstrap persists globally,
	hence $T_N=\infty$.
	
	\medskip
	
	\medskip
\noindent\underline{Step 5: Passage to the limit $N\to\infty$.}
Fix $T>0$. From \eqref{eq:energy_lub_abs} and the preceding estimate
for $\partial_t h^N$, we have
\[
\sup_N\left(
\|h^N\|_{L^\infty(0,T;A^4)}
+\|h^N\|_{L^1(0,T;A^6)}
+\|\partial_t h^N\|_{L^1(0,T;A^4)}
\right)<\infty.
\]

The compactness argument used in the preceding theorem applies here
with $A^6(\TT)$ compactly embedded into $A^4(\TT)$. Thus, after
extracting a subsequence,
\[
h^N\longrightarrow h
\qquad\text{strongly in }L^1(0,T;A^4(\TT)).
\]
The uniform bounds and the Wiener product estimates allow us to pass to
the limit as before. The limiting equation
then gives $\partial_t h\in L^1(0,T;A^4)$, and hence
\[
h\in W^{1,1}(0,T;A^4)
\subset C([0,T];A^4),
\qquad h(0)=h_0.
\]
Since $T>0$ is arbitrary, the solution is global.

\textcolor{black}{
Finally, for zero-mean functions,
\[
\frac{\sqrt{\delta}\,\chi}{2}\|h\|_{A^2}
+\frac{\sqrt{\delta}\,\lambda}{8}\|h\|_{A^6}
\geq
\rho\left(
\|h\|_{A^0}
+\sqrt{\delta}\,\Theta\|h\|_{A^4}
\right),
\qquad
\rho:=
\min\left\{
\frac{\sqrt{\delta}\,\chi}{2},
\frac{\lambda}{8\Theta}
\right\}.
\]
Therefore, \eqref{eq:energy_lub_abs} and Gr\"onwall's inequality give
\[
\|h(t)\|_{A^0}
+\sqrt{\delta}\,\Theta\|h(t)\|_{A^4}
\leq
e^{-\rho t}
\left(
\|h_0\|_{A^0}
+\sqrt{\delta}\,\Theta\|h_0\|_{A^4}
\right),
\]
which proves the desired exponential decay.}
\end{proof}

\section*{Acknowledgement} 
D.A.-O.\ has been partially supported by grant RYC2023-045563-I (MICIU/AEI/10.13039/501100011033 and ESF+) and by project PID2023-148028NB-I00. D.A.-O.\ and R.G.-B.\ are supported by the projects ``An\'alisis Matem\'atico Aplicado
y Ecuaciones Diferenciales" Grant PID2022-141187NB-I00 and ``EDPs Deterministas y Estoc\'asticas en Biolog\'ia y Mec\'anica de Fluidos" Grant PID2025-171734NA-I00 funded by MCIN/AEI/10.13039/501100011033/FEDER, UE.
\textcolor{black}{Both authors are grateful to the referees for their thorough evaluation and insightful feedback, which greatly enhanced the quality of our paper.}


\begin{thebibliography}{20}
	
	\bibitem{APW2023}
	S.~Agrawal, N.~Patel, and S.~Wu,
	\newblock \emph{Rigidity of acute angled corners for one phase Muskat interfaces},
	\newblock Adv. Math. \textbf{412} (2023), 108801.
	
	\bibitem{AlazardOneFluid2019}
	T.~Alazard and Q.-H.~Nguyen,
	\newblock \emph{The Cauchy problem for the Muskat equation: a critical initial data},
	\newblock Comm. Math. Phys. \textbf{370} (2019), 1043--1102.
	
	\bibitem{AlazardHung2023}
	T.~Alazard and Q.-H.~Nguyen,
	\newblock \emph{Muskat equation: identities and the Cauchy problem},
	\newblock Comm. Math. Phys. \textbf{377} (2020), 1421--1459.
	
	\bibitem{Ambrose2014}
	D.~M.~Ambrose,
	\newblock \emph{The zero surface tension limit of two-dimensional interfacial Darcy flow},
	\newblock J. Math. Fluid Mech. \textbf{16} (2014), 905--143.
	
	\bibitem{MR3656704}
	D.~M.~Ambrose and M.~Siegel,
	\newblock \emph{Well-posedness of two-dimensional hydroelastic waves},
	\newblock Proc. Roy. Soc. Edinburgh Sect. A \textbf{147} (2017), no.~3, 529--570.
	
	\bibitem{Bear1988}
	J.~Bear,
	\newblock \emph{Dynamics of Fluids in Porous Media},
	\newblock Dover Publications, 1988.
	
	\bibitem{BBD-P1995}
	F.~Bernis, L.~A.~Beretta, M.~Bertsch, and R.~Dal Passo,
	\newblock \emph{Nonnegative solutions of a fourth-order nonlinear degenerate parabolic equation},
	\newblock Arch. Ration. Mech. Anal. \textbf{129} (1995), 175--200.
	
	\bibitem{BF1990}
	J.~Bernis and A.~Friedman,
	\newblock \emph{Higher order nonlinear degenerate parabolic equations},
	\newblock J. Differential Equations \textbf{83} (1990), 179--206.
	
	\bibitem{BP1996}
	A.~L.~Bertozzi and M.~Pugh,
	\newblock \emph{The lubrication approximation for thin viscous films: regularity and long-time behavior of weak solutions},
	\newblock Comm. Pure Appl. Math. \textbf{49} (1996), 85--123.
	
	\bibitem{BG-B2019}
	G.~Bruell and R.~Granero-Belinch\'on,
	\newblock \emph{On the thin film Muskat and the thin film Stokes equations},
	\newblock J. Math. Fluid Mech. \textbf{21} (2019), 21:33.
	
	\bibitem{BocchiGancedo2022}
	E.~Bocchi and F.~Gancedo,
	\newblock \emph{Rigorous thin film approximations of the one-phase unstable Muskat problem},
	\newblock Indiana Univ. Math. J. \textbf{73} (2024), no.~5, 1747--1796.

	\bibitem{BukalMuha2021}
	M. Bukal and B. Muha,
	\newblock \emph{Rigorous derivation of a linear sixth-order thin film equation as a reduced model for thin fluid -- thin structure interaction problems,}
	\newblock Applied Mathematics \& Optimization \textbf{84}(2) (2021), 2245--2288.
	
\bibitem{BukalMuha2022}
	M. Bukal and B. Muha,
	\newblock \emph{Justification of a nonlinear sixth-order thin-film equation as the reduced model for a fluid -- structure interaction problem},
	\newblock Nonlinearity \textbf{35}(8) (2022), 4695--4726.
	
	\bibitem{MR4673875}
	S.~Cameron and R.~M.~Strain,
	\newblock \emph{Critical local well-posedness for the fully nonlinear Peskin problem},
	\newblock Comm. Pure Appl. Math. \textbf{77} (2024), no.~2, 901--989.
	
	\bibitem{CCFGL-F2012}
	A.~Castro, D.~C\'ordoba, C.~Fefferman, F.~Gancedo, and M.~L\'opez-Fern\'andez,
	\newblock \emph{Rayleigh--Taylor breakdown for the Muskat problem with applications to water waves},
	\newblock Ann. of Math. \textbf{175} (2012), 909--948.
	
	\bibitem{CCFG2016}
	A.~Castro, D.~C\'ordoba, C.~Fefferman, and F.~Gancedo,
	\newblock \emph{Splash singularities for the one-phase Muskat problem in stable regimes},
	\newblock Arch. Ration. Mech. Anal. \textbf{222} (2016), 213--243.
	
	\bibitem{Chen1993}
	Y.~Chen,
	\newblock \emph{The Hele--Shaw problem and area-preserving curve-shortening motions},
	\newblock Arch. Ration. Mech. Anal. \textbf{123} (1993), 117--151.
	
	\bibitem{Con1993}
	P.~Constantin, T.~F.~Dupont, R.~E.~Goldstein, L.~P.~Kadanoff, M.~J.~Shelley, and S.-M.~Zhou,
	\newblock \emph{Droplet breakup in a model of the Hele--Shaw cell},
	\newblock Phys. Rev. E \textbf{47} (1993), 4169--4192.
	
	\bibitem{ConstantinPugh1993}
	P.~Constantin and M.~Pugh,
	\newblock \emph{Global solutions for small data to the Hele--Shaw problem},
	\newblock Nonlinearity \textbf{6} (1993), 393--415.
	
	\bibitem{Con2018}
	P.~Constantin, T.~Elgindi, H.~Nguyen, and V.~Vicol,
	\newblock \emph{On singularity formation in a Hele--Shaw model},
	\newblock Comm. Math. Phys. \textbf{363} (2018), 139--171.
	
	\bibitem{CordobaPernas-Castano2017}
	D.~C\'ordoba and T.~Pernas-Casta\~no,
	\newblock \emph{Non-splat singularity for the one-phase Muskat problem},
	\newblock Trans. Amer. Math. Soc. \textbf{369} (2017), 711--754.
	
	\bibitem{DGN2021}
	H.~Dong, F.~Gancedo, and H.~Q.~Nguyen,
	\newblock \emph{Global well-posedness for the one-phase Muskat problem},
	\newblock Comm. Pure Appl. Math. \textbf{76} (2023), no.~12, 3912--3967.
	
	\bibitem{DuchonRobert1984}
	J.~Duchon and R.~Robert,
	\newblock \emph{\'Evolution d'une interface par capillarit\'e et diffusion de volume. I. Existence locale en temps},
	\newblock Ann. Inst. H. Poincar\'e Anal. Non Lin\'eaire \textbf{1} (1984), 361--378.
	
	\bibitem{ELM2011}
	J.~Escher, P.~Laurencot, and B.-V.~Matioc,
	\newblock \emph{Existence and stability of weak solutions for a degenerate parabolic system modelling two-phase flows in porous media},
	\newblock Ann. Inst. H. Poincar\'e Anal. Non Lin\'eaire \textbf{28} (2011), 583--598.
	
	\bibitem{EMM2012}
	J.~Escher, A.-V.~Matioc, and B.-V.~Matioc,
	\newblock \emph{Modelling and analysis of the Muskat problem for thin fluid layers},
	\newblock J. Math. Fluid Mech. \textbf{14} (2012), 267--277.
	
	\bibitem{EM2013}
	J.~Escher and B.-V.~Matioc,
	\newblock \emph{Existence and stability of solutions for a strongly coupled system modelling thin fluid films},
	\newblock NoDEA Nonlinear Differential Equations Appl. \textbf{20} (2013), 539--555.
	
	\bibitem{EscherMatioc2011}
	J.~Escher and B.-V.~Matioc,
	\newblock \emph{On the parabolicity of the Muskat problem: well-posedness, fingering, and stability results},
	\newblock Z. Anal. Anwend. \textbf{30} (2011), 193--218.
	
	\bibitem{EscherSimonett1997}
	J.~Escher and G.~Simonett,
	\newblock \emph{Classical solutions for Hele--Shaw models with surface tension},
	\newblock Adv. Differential Equations \textbf{2} (1997), 619--642.
	
	\bibitem{FlynnNguyen2021}
	D.~Flynn and H.~Q.~Nguyen,
	\newblock \emph{The vanishing surface tension limit of the Muskat problem},
	\newblock Comm. Math. Phys. \textbf{382} (2021), 1205--1241.
	
	\bibitem{MR4959949}
	M.~Gahn,
	\newblock \emph{Derivation of a Biot plate system for a thin poroelastic layer},
	\newblock SIAM J. Math. Anal. \textbf{57} (2025), no.~5, 5303--5341.
	
	\bibitem{Sema2017}
	F.~Gancedo,
	\newblock \emph{A survey for the Muskat problem and a new estimate},
	\newblock SeMA J. \textbf{74} (2017), 21--35.
	
	\bibitem{GG-JPS2019}
	F.~Gancedo, E.~Garc\'ia-Ju\'arez, N.~Patel, and R.~M.~Strain,
	\newblock \emph{On the Muskat problem with viscosity jump: global in time results},
	\newblock Adv. Math. \textbf{345} (2019), 552--597.
	
	\bibitem{GG-JPS2019Bubble}
	F.~Gancedo, E.~Garc\'ia-Ju\'arez, N.~Patel, and R.~M.~Strain,
	\newblock \emph{Global regularity for gravity unstable Muskat bubbles},
	\newblock Memoirs of the American Mathematical Society, \textbf{292} (2023), no.~1455, v+87 pp.
	
	\bibitem{GancedoLazar2022}
	F.~Gancedo and O.~Lazar,
	\newblock \emph{Global well-posedness for the three dimensional Muskat problem in the critical Sobolev space},
	\newblock Arch. Ration. Mech. Anal. \textbf{246} (2022), 1141--207.
	
	\bibitem{GG-BS2020}
	F.~Gancedo, R.~Granero-Belinch\'on, and M.~Scrobogna,
	\newblock \emph{Surface tension stabilization of the Rayleigh--Taylor instability for a fluid layer in a porous medium},
	\newblock Ann. Inst. H. Poincar\'e Anal. Non Lin\'eaire \textbf{37} (2020), 1299--1343.
	
	\bibitem{GancedoStrain2014}
	F.~Gancedo and R.~M.~Strain,
	\newblock \emph{Absence of splash singularities for surface quasi-geostrophic sharp fronts and the Muskat problem},
	\newblock Proc. Natl. Acad. Sci. USA \textbf{111} (2014), 635--639.
	
	\bibitem{GraneroBelinchonScrobogna2019}
	R.~Granero-Belinch\'on and S.~Scrobogna,
	\newblock \emph{Asymptotic models for free boundary flow in porous media},
	\newblock Physica D \textbf{392} (2019), 1--16.
	
	\bibitem{GraneroBelinchonScrobogna2020}
	R.~Granero-Belinch\'on and S.~Scrobogna,
	\newblock \emph{On an asymptotic model for free boundary Darcy flow in porous media},
	\newblock SIAM J. Math. Anal. \textbf{52} (2020), no.~5, 4937--4970.
	
	\bibitem{GHS2007}
	Y.~Guo, C.~Hallstrom, and D.~Spirn,
	\newblock \emph{Dynamics near unstable, interfacial fluids},
	\newblock Comm. Math. Phys. \textbf{270} (2007), 635--689.
	
	\bibitem{HQNguyen2019}
	H.~Q.~Nguyen,
	\newblock \emph{On well-posedness of the Muskat problem with surface tension},
	\newblock Adv. Math. \textbf{374} (2020), 107344.
	
	\bibitem{Kim2003}
	I.~C.~Kim,
	\newblock \emph{Uniqueness and existence results on the Hele--Shaw and the Stefan problems},
	\newblock Arch. Ration. Mech. Anal. \textbf{168} (2003), 299--328.
	
	\bibitem{MR2812939}
	A.~Korobkin, E.~I.~P\u{a}r\u{a}u, and J.-M.~Vanden-Broeck,
	\newblock \emph{The mathematical challenges and modelling of hydroelasticity},
	\newblock Philos. Trans. R. Soc. Lond. Ser. A Math. Phys. Eng. Sci. \textbf{369} (2011), no.~1947, 2803--2812.
	
	\bibitem{Lannes2013}
	D.~Lannes,
	\newblock \emph{The Water Waves Problem: Mathematical Analysis and Asymptotics},
	\newblock Mathematical Surveys and Monographs, Vol.~188,
	\newblock American Mathematical Society, Providence, RI, 2013.
	
	\bibitem{LM2013}
	P.~Laurencot and B.-V.~Matioc,
	\newblock \emph{A gradient flow approach to a thin film approximation of the Muskat problem},
	\newblock Calc. Var. Partial Differential Equations \textbf{47} (2013), 319--341.
	
	\bibitem{LM2014}
	P.~Laurencot and B.-V.~Matioc,
	\newblock \emph{A thin film approximation of the Muskat problem with gravity and capillary forces},
	\newblock J. Math. Soc. Japan \textbf{66} (2014), 1043--1071.
	
	\bibitem{MR3608168}
	S.~Liu and D.~M.~Ambrose,
	\newblock \emph{Well-posedness of two-dimensional hydroelastic waves with mass},
	\newblock J. Differential Equations \textbf{262} (2017), no.~9, 4656--4699.
	
	\bibitem{Mat2012}
	B.-V.~Matioc,
	\newblock \emph{Non-negative global weak solutions for a thin film approximation of the Muskat problem},
	\newblock J. Differential Equations \textbf{252} (2012), 1043--1071.
	
	\bibitem{MP2012}
	B.-V.~Matioc and G.~Prokert,
	\newblock \emph{Hele--Shaw flow in thin threads: a rigorous limit result},
	\newblock Interfaces Free Bound. \textbf{14} (2012), 205--230.
	
	\bibitem{MR2863467}
	A.~Meirmanov,
	\newblock \emph{The Muskat problem for a viscoelastic filtration},
	\newblock Interfaces Free Bound. \textbf{13} (2011), no.~4, 463--484.
	
	\bibitem{Otto1997}
	F.~Otto,
	\newblock \emph{Viscous fingering: an optimal bound on the growth rate of the mixing zone},
	\newblock SIAM J. Appl. Math. \textbf{57} (1997), 982--990.
	
	\bibitem{MR2812947}
	P.~I.~Plotnikov and J.~F.~Toland,
	\newblock \emph{Modelling nonlinear hydroelastic waves},
	\newblock Philos. Trans. R. Soc. Lond. Ser. A Math. Phys. Eng. Sci. \textbf{369} (2011), no.~1947, 2942--2956.
	
	\bibitem{MR2413099}
	J.~F.~Toland,
	\newblock \emph{Steady periodic hydroelastic waves},
	\newblock Arch. Ration. Mech. Anal. \textbf{189} (2008), no.~2, 325--362.
	
	\bibitem{WanYang2026}
	L.~Wan and J.~Yang,
	\newblock \emph{On the well-posedness of two-dimensional Muskat problem with an elastic interface},
	\newblock arXiv:2601.01374 (2026).
	
	\bibitem{MR4104949}
	Z.~Wang and J.~Yang,
	\newblock \emph{Energy estimates and local well-posedness of 3D interfacial hydroelastic waves between two incompressible fluids},
	\newblock J. Differential Equations \textbf{269} (2020), no.~7, 6055--6087.
	
	\bibitem{YeTanveer2011}
	J.~Ye and S.~Tanveer,
	\newblock \emph{Global existence for a translating near-circular Hele--Shaw bubble},
	\newblock SIAM J. Math. Anal. \textbf{43} (2011), 457--506.
	
\end{thebibliography}
 \end{document}